\newcommand{\R}{{\mathbb R}}
\newcommand{\C}{{\mathbb C}}
\newcommand{\N}{{\mathbb N}}
\def\norm#1{\left\|#1\right\|}
\def\inpro#1{\left\langle #1 \right\rangle}
\def\Set#1{\left\{\,#1\,\right\}}\def\gp#1{\left(#1\right)}
\def\bk#1{\left[#1\right]}
\def\G{{\widetilde G}}
\def\bn{\begin{enumerate}}  
\def\en{\end{enumerate}}    
\def\bmt{\begin{matrix}} 
\def\emt{\end{matrix}}
\newtheorem{mythm}{Theorem}[section]
\newtheorem{theorem}{Theorem}[section]
\newtheorem{definition}[mythm]{Definition}
\newtheorem{corollary}[mythm]{Corollary}
\newtheorem{proposition}[mythm]{Proposition}
\newtheorem{lemma}[mythm]{Lemma}
\theoremstyle{remark}
\newtheorem{remark}[mythm]{Remark}
\theoremstyle{ob}
\newtheorem{ob}[mythm]{Observation}
\newcommand{\f}{\{f_i\}_{i=1}^k}
\newcommand{\F}{\mathcal{F}}
\newcommand{\Hn}{\mathcal{H}_n}
\newcommand{\diag}{\text{diag}}
\begin{document}

\title{ Diagram vectors and Tight Frame Scaling in Finite Dimensions$^*$}

\author{Martin S. Copenhaver}
\address{School of Mathematics, Georgia Institute of Technology}
\email{copenhaver@gatech.edu}

\author{Yeon Hyang Kim}
\address{Department of Mathematics, Central Michigan University}
\email{kim4y@cmich.edu}

\author{Cortney Logan}
\address{Department of Mathematics, Stonehill College}
\email{clogan@students.stonehill.edu}

\author{Kyanne Mayfield}
\address{Department of Mathematics, University of Portland}
\email{mayfield13@up.edu}

\author{Sivaram K. Narayan}
\address{Department of Mathematics, Central Michigan University}
\email{naray1sk@cmich.edu}

\author{Matthew J. Petro}
\address{Computer Aided Engineering, University of Wisconsin-Madison}
\email{petro@cae.wisc.edu}

\author{Jonathan Sheperd}
\address{Department of Mathematics, University of Notre Dame}
\email{jsheperd@nd.edu}

\thanks{*Reseach supported  by NSF-REU Grant DMS 08-51321. This work was done as a part of the REU program in  Summer 2011.}

\subjclass[2010]{Primary 42C15, 05B20, 15A03}

\date{December 7, 2011.}

\keywords{Frames, Tight frames, Tight frame scaling,  Diagram vectors,  Gramian operator }

\begin{abstract}
We consider frames in a finite-dimensional Hilbert space $\Hn$ where frames are exactly the spanning sets of the vector space.  
The diagram vector of a vector in \(\R^2\) was previously  defined using polar coordinates and was used to characterize tight frames in \(\R^2\) in a geometric fashion. 
Reformulating the definition of a diagram vector in \(\R^2\) we provide a natural extension of this notion to \(\R^n\) and \(\C^n\). Using the diagram vectors we give a characterization of tight frames in \(\R^n\) or \(\C^n\).  Further we  provide a characterization of when a unit-norm frame in $\R^n$ or \(\C^n\)  
 can be scaled to a tight frame. This classification allows us to determine all scaling coefficients that make a unit-norm frame into a tight frame. 
\end{abstract}

\maketitle

\section{Introduction}\label{intro}
In recent years, new focus has been given to representation systems that are not a basis, but still admit stable decomposition and reconstruction algorithms.  The key notion in this regard is that of a frame. 
A frame in finite dimensions is a redundant set of vectors that span the  vector space. 
The study of frames began in 1952 with their introduction by Duffin and Schaeffer \cite{duffin} and has since been expanded by Daubechies \cite{daubechies} and others  \cite{BF03, classes, erasures, RS95}.   
  A basis is a  linearly independent spanning set.  
If $\f$ is an orthonormal baisis for a finite dimensional inner product space then each vector $f$ has a unique representation as 
$\displaystyle f = \sum_{i=1}^k \inpro{f, f_i} f_i$. 
 If a signal is represented as a vector and transmitted by sending the sequence of coefficients of its representation,  
 then using an orthonormal basis to analyze and later reconstruct the signal can be problematic. This is because the loss of any coefficient during transmission means that the original signal cannot be recovered. As a solution to this problem redundancy is introduced in frames so that it might be possible to reconstruct a signal if some coefficients are lost. A tight frame is a special case of a frame,  which has a reconstruction formula similar to  that of an orthonormal basis. Because of this simple formulation of reconstruction, tight frames are employed in a variety  of applications such as sampling, signal processing, filtering, smoothing, denoising, compression, image processing, and in other areas.  
 
 In \cite{physint}, the authors give necessary and sufficient conditions for the existence of a tight frame with a given sequence of norms, and provide a method of constructing a tight frame with such a sequence of norms. Here we address the question of when a tight frame exists such that the frame vectors point in specified directions. That is, given a sequence of unit vectors,  can we find a way of scaling each vector so that the resulting frame is tight? 
To answer this question, we begin by defining various notions that are mentioned above.  
A good introduction to frames in finite dimensions can be found in \cite{ffu}.

Let $I$ be a subset of $\N$. 
A \emph{frame} in a finite dimensional Hilbert space $\Hn$ 
 is a sequence of vectors $\{f_i\}_{i\in I}$ for which there exist constants $0<A\leq B<\infty$ such that for all $f \in\Hn$,
\[A\| f \|^2\leq\displaystyle\sum_{i\in I}|\langle f,f_i \rangle|^2 \leq B\|f\|^2.\]
When $A=B=\lambda$, $\{f_i\}_{i\in I}$ is called a \emph{$\lambda-$tight frame}. If $\lambda=1$ then the frame is called a \emph{Parseval frame}. A \emph{unit-norm frame} is a frame such that each vector in the frame has norm one. In a finite dimensional Hilbert space $\Hn$, a sequence of vectors is a frame if and only if it spans  $\Hn$.

Given a sequence of vectors  $\Set{f_i}_{i=1}^k$ in $\Hn$,  we define the \emph{analysis operator} to be the linear map $\theta: \Hn \rightarrow \ell^2(\Set{1, \cdots, k})$ defined by $(\theta f)(i)=\langle f, f_i \rangle$. The adjoint  $\theta^*\,:\, \ell^2(\Set{1, \cdots, k})\rightarrow \Hn$ is called the \emph{synthesis operator}. 
Using an orthonormal basis for  $\Hn$, the analysis operator associated with a sequence of vectors $\f$ can be written as  the $k\times n$ matrix
\[
\theta = \left[
\begin{array}{c c c }
\leftarrow & f_1^*& \rightarrow \\
& \vdots & \\
\leftarrow & f_k^*& \rightarrow 
\end{array}\right]
,\]
and the synthesis operator is the $n\times k$ matrix
\[\theta^* = \left[
\begin{array}{c c c }
\uparrow & & \uparrow \\
f_1 & \cdots & f_k\\
\downarrow & & \downarrow
\end{array}\right].
\]
The \emph{frame operator} $S$ of a sequence of vectors $\Set{f_i}_{i=1}^k$ (not necessarily a frame) is defined as $\theta^*\theta$. For all $f\in\Hn$,
\[
Sf=\theta^*\theta f= \displaystyle\sum_{i =1}^k \langle f,f_i\rangle f_i
.\]
For $\f\subset  \Hn$, the \emph{Gramian operator} $G$ is the $k\times k$ matrix defined by
\[
G=\theta\theta^*=(\langle f_i, f_j \rangle)_{i,j=1}^k.
\]
Given a sequence of vectors $\f$ in $\Hn$, it is known that the frame operator $S$ of the sequence has rank $n$ if and only if the sequence is a frame. The frame operator  $S= \lambda I_n$ if and only if $\f$ is a $\lambda-$tight frame. Also, $S = I_n$ if and only if $\f$ is a Parseval frame \cite{ffu}.

\section{Diagram Vectors in $\R^n  (\text{or } \C^n)$}

One of the simple tools we have for determining whether a frame for \(\mathbb{R}^2\) is tight or not is the notion of \emph{diagram vectors} \cite{ffu}. 
We express any vector $f$ in $\R^2$ using polar coordinates, 
$\displaystyle f = \bk{ \bmt  a \cos \theta \\ a \sin \theta \emt }$, where $\theta$ is the angle the vector makes with the positive $x$-axis. We  define the diagram vector associated with \(f\) by 
\[\tilde{f}= \bk{ \bmt a^2\cos{2\theta} \\ a^2\sin{2\theta} \emt }.\]
We observe that 
if $\displaystyle f = \bk{ \bmt  f(1) \\ f(2)  \emt }$, then 
\[ \tilde{f}= \bk{ \bmt  (f(1))^2- (f(2))^2 \\ 2f(1)f(2) \emt }. \]
This description of the diagram vector is useful algebraically, while the original definition is well-suited to geometric reasoning. The power of this notion comes from the next result, which follows  from the definition and the fact that a frame is tight if and only if its frame operator is a positive scalar multiple of the identity operator.

\begin{proposition} \cite{ffu} \label{Jprop:R2DVtight}
Let \(\{f_i\}_{i=1}^k\) be a sequence of vectors in \(\mathbb{R}^2\), not all of which are zero. Then \(\{f_i\}_{i=1}^k\) is a tight frame if and only if \(\sum_{i=1}^k\tilde{f_i}=0\).
\end{proposition}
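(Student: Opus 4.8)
The plan is to compute the frame operator $S$ of the sequence $\{f_i\}_{i=1}^k$ explicitly as a $2 \times 2$ symmetric matrix and relate its entries to the components of the diagram vectors. Writing $f_i = \bk{\bmt f_i(1) \\ f_i(2) \emt}$, the frame operator is $S = \sum_{i=1}^k f_i f_i^*$, so its entries are $S_{11} = \sum_i (f_i(1))^2$, $S_{22} = \sum_i (f_i(2))^2$, and $S_{12} = S_{21} = \sum_i f_i(1) f_i(2)$. I would then observe that $S_{11} - S_{22} = \sum_i \big((f_i(1))^2 - (f_i(2))^2\big)$ is exactly the first component of $\sum_i \tilde{f_i}$, and that $2 S_{12} = \sum_i 2 f_i(1) f_i(2)$ is exactly the second component of $\sum_i \tilde{f_i}$.

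With this dictionary in hand, the argument becomes an equivalence between two linear-algebraic conditions. First I would invoke the fact quoted in the introduction that $\{f_i\}_{i=1}^k$ is a tight frame if and only if $S = \lambda I_2$ for some $\lambda > 0$. The condition $S = \lambda I_2$ is equivalent to the two scalar equations $S_{11} = S_{22}$ and $S_{12} = 0$, which by the computation above are precisely the statements that both components of $\sum_i \tilde{f_i}$ vanish, i.e.\ $\sum_i \tilde{f_i} = 0$. So the forward direction (tight frame $\Rightarrow$ $\sum_i \tilde{f_i}=0$) is immediate.

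For the converse I would need to show that $\sum_i \tilde{f_i} = 0$ forces $S$ to be a \emph{positive} scalar multiple of the identity, not merely that its diagonal entries agree and its off-diagonal entry vanishes. The condition $\sum_i \tilde{f_i} = 0$ gives $S_{11} = S_{22} =: \lambda$ and $S_{12} = 0$, so $S = \lambda I_2$; the remaining point is that $\lambda > 0$. This is where the hypothesis that not all $f_i$ are zero does the work: $\lambda = S_{11} = \sum_i (f_i(1))^2 \geq 0$, and if $\lambda = 0$ then also $S_{22} = 0$, forcing every $f_i(1) = f_i(2) = 0$, contradicting the assumption. Hence $\lambda > 0$ and $\{f_i\}_{i=1}^k$ is a $\lambda$-tight frame.

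I expect the main (though modest) obstacle to be bookkeeping rather than conceptual: one must be careful to match the algebraic form of $\tilde f$ given in the excerpt to the correct entries of $S$, and to handle the positivity of $\lambda$ separately so as to conclude an actual tight frame rather than just $S$ being diagonal with equal entries. No appeal to the polar-coordinate definition is needed, since the algebraic description $\tilde{f} = \bk{\bmt (f(1))^2 - (f(2))^2 \\ 2 f(1) f(2) \emt}$ is exactly what links the diagram vectors to the frame operator.
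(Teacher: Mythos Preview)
Your proposal is correct and follows essentially the same approach the paper indicates: the paper does not give a standalone proof of this proposition (it is cited from \cite{ffu}) but remarks that it ``follows from the definition and the fact that a frame is tight if and only if its frame operator is a positive scalar multiple of the identity operator,'' and its proof of the \(\R^n\) generalization (Proposition~\ref{Jprop:RnDVtight}) computes the entries of \(S\) and matches them to the components of \(\sum_i \tilde f_i\) exactly as you do. If anything, your argument is slightly more careful than the paper's, since you explicitly use the ``not all zero'' hypothesis to secure \(\lambda>0\), a point the paper's proof of Proposition~\ref{Jprop:RnDVtight} glosses over.
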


The diagram of the sum of $\tilde{f}_i$ provides  a nice visual representation of the tight frame $\f$.
We seek to extend this definition to \(\R^n\) and $\C^n$.  In \(\R^2\) the condition that a frame is tight is equivalent to only two conditions on the components of the frame vectors, which allows us to define diagram vectors that are themselves in  \(\R^2\). However, in higher dimensions  it is impossible to reduce the tight frame condition to  \(n\) simple conditions, so there is little chance of preserving the useful geometric properties found in \(\R^2\). On the other hand, there is a property of the inner product of diagram vectors in $\R^2$ that we can preserve using our generalization of diagram vectors.

\begin{proposition}\label{Jprop:R2DVproduct}
If \(f,g\) are any vectors in \(\mathbb{R}^2\), then
\begin{equation*}\langle\tilde{f},\tilde{g}\rangle=2\langle f,g\rangle^2-\|f\|^2\|g\|^2.\end{equation*}
\end{proposition}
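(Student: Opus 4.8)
The plan is to verify the identity by a direct computation, and the cleanest route is to use the algebraic (Cartesian) description of the diagram vector recorded just before the statement. Writing $f = \bk{\bmt f(1) \\ f(2) \emt}$ and $g = \bk{\bmt g(1) \\ g(2) \emt}$, we have $\tilde{f} = \bk{\bmt f(1)^2 - f(2)^2 \\ 2f(1)f(2) \emt}$ and the analogous expression for $\tilde{g}$. First I would compute the inner product $\inpro{\tilde{f}, \tilde{g}}$ as the sum of the two coordinate products, obtaining
\[
\inpro{\tilde{f}, \tilde{g}} = \gp{f(1)^2 - f(2)^2}\gp{g(1)^2 - g(2)^2} + 4\, f(1) f(2) g(1) g(2).
\]

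Next I would expand the right-hand side $2\inpro{f,g}^2 - \norm{f}^2\norm{g}^2$ using $\inpro{f,g} = f(1)g(1) + f(2)g(2)$ and $\norm{f}^2 = f(1)^2 + f(2)^2$, and verify termwise that the two polynomials in the four variables $f(1), f(2), g(1), g(2)$ coincide. This is entirely routine; the only care needed is bookkeeping of the cross terms, where the summand $4\,f(1)f(2)g(1)g(2)$ on the left matches the doubled cross term arising from squaring $\inpro{f,g}$, while the mixed squares $-f(1)^2 g(2)^2 - f(2)^2 g(1)^2$ are supplied precisely by $-\norm{f}^2\norm{g}^2$.

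A conceptually more transparent alternative is to work in polar coordinates, as in the original definition. Writing $f = a(\cos\theta, \sin\theta)$ and $g = b(\cos\phi, \sin\phi)$, one has $\tilde{f} = a^2(\cos 2\theta, \sin 2\theta)$ and $\tilde{g} = b^2(\cos 2\phi, \sin 2\phi)$, so that $\inpro{\tilde{f}, \tilde{g}} = a^2 b^2 \cos(2\theta - 2\phi)$ by the angle-subtraction formula. Applying the double-angle identity $\cos 2\alpha = 2\cos^2\alpha - 1$ with $\alpha = \theta - \phi$, together with $\inpro{f,g} = ab\cos(\theta - \phi)$ and $\norm{f}^2\norm{g}^2 = a^2 b^2$, immediately yields the claimed identity. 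There is no genuine obstacle here beyond the algebra itself; I would likely present the Cartesian version, since it requires no trigonometric identities and its structure anticipates the coordinatewise definition of the higher-dimensional diagram vectors introduced next.
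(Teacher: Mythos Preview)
Your proposal is correct. The paper does not supply a separate proof of this proposition; it is stated as motivation and then subsumed by the general $\R^n$ identity (Proposition~\ref{Jprop:RnDVproduct}), whose proof is exactly the Cartesian expansion you describe---starting from $n\inpro{f,g}^2 - \norm{f}^2\norm{g}^2$, splitting the square of the sum into diagonal and off-diagonal parts, and recognizing the result as $(n-1)\inpro{\tilde f,\tilde g}$. Your first argument is the $n=2$ instance of that computation, so it aligns with the paper's approach.

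Your polar-coordinate alternative is a genuinely different route: it exploits the angle-doubling structure special to $\R^2$ and reduces the identity to $\cos 2\alpha = 2\cos^2\alpha - 1$. This is shorter and more conceptual in two dimensions, but it does not extend to the $\R^n$ definition (there being no polar description there), which is presumably why the paper favors the coordinatewise expansion. Either argument is fine for the stated proposition; your instinct to present the Cartesian version because it anticipates the general case is sound.
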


The following definition generalizes the notion of an associated diagram vector to a vector in  \(\R^n\) and allows us to prove analogues of Propositions \ref{Jprop:R2DVtight} and \ref{Jprop:R2DVproduct}. 
In the rest of the paper we denote $(f(i))^2$ as $f^2(i)$. 

\begin{definition}\label{Jdefn:RnDV}
For any vector \(f\in\mathbb{R}^n\), we define the diagram vector associated with \(f\), denoted \(\tilde{f}\), by
\begin{equation*}
\tilde{f}= 
\frac{1}{\sqrt{n-1}}
\begin{bmatrix}f^2(1)-f^2(2)\\  \vdots  \\ f^2(n-1)-f^2(n) \\  
\sqrt{2n}f(1)f(2) \\ \vdots  \\  \sqrt{2n}f(n-1)f(n)
 \end{bmatrix}\in\mathbb{R}^{n(n-1)},
\end{equation*}
where the difference of squares 
$f^2(i)- f^2(j)$ and the 
 product \(f(i)f(j)\)  occur exactly once for \(i < j, \ i = 1, 2, \cdots, n-1.\) 
 \end{definition}
Note that for \(n=2\) the above definition agrees with the standard notion of diagram vectors in \(\mathbb{R}^2\).

\begin{proposition}\label{Jprop:RnDVtight}
Let \(\{f_i\}_{i=1}^k\) be a sequence of vectors in \(\mathbb{R}^n\), not all of which are zero. Then \(\{f_i\}_{i=1}^k\) is a tight frame if and only if \(\sum_{i=1}^k\tilde{f_i}=0\). 
\end{proposition}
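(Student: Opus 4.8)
The plan is to translate the condition $\sum_{i=1}^k \tilde{f_i} = 0$ into statements about the frame operator $S = \theta^*\theta = \sum_{i=1}^k f_i f_i^*$ and then invoke the criterion, recalled in the introduction, that $\{f_i\}_{i=1}^k$ is a $\lambda$-tight frame precisely when $S = \lambda I_n$ for some $\lambda > 0$. Writing $S = (S_{pq})_{p,q=1}^n$ with $S_{pq} = \sum_{i=1}^k f_i(p) f_i(q)$, I would first record that the equation $S = \lambda I_n$ (for some scalar $\lambda$) is equivalent to two families of scalar conditions: (i) $\sum_{i=1}^k f_i(p) f_i(q) = 0$ for all $p \neq q$ (the off-diagonal entries vanish), and (ii) $\sum_{i=1}^k f_i^2(p) = \sum_{i=1}^k f_i^2(q)$ for all $p,q$ (all diagonal entries coincide).

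Next I would compute $\sum_{i=1}^k \tilde{f_i}$ coordinate by coordinate from Definition \ref{Jdefn:RnDV}. By linearity, the coordinate of $\sum_i \tilde{f_i}$ indexed by the pair $(p,q)$ with $p<q$ equals $\frac{1}{\sqrt{n-1}}\sum_i \big(f_i^2(p) - f_i^2(q)\big)$ in the difference-of-squares block, and $\frac{\sqrt{2n}}{\sqrt{n-1}}\sum_i f_i(p) f_i(q)$ in the product block. Since the scalar factors $1/\sqrt{n-1}$ and $\sqrt{2n}/\sqrt{n-1}$ are nonzero, the equation $\sum_i \tilde{f_i} = 0$ holds if and only if $\sum_i \big(f_i^2(p) - f_i^2(q)\big) = 0$ and $\sum_i f_i(p) f_i(q) = 0$ for every pair $p<q$.

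These are exactly conditions (i) and (ii): the vanishing of the product coordinates over all $p<q$ gives (i) (the case $p>q$ following by symmetry of $S$), while the vanishing of the difference coordinates over all $p<q$ gives $S_{pp} = S_{qq}$ for every pair, hence all diagonal entries are equal, which is (ii). Therefore $\sum_i \tilde{f_i} = 0$ is equivalent to $S = \lambda I_n$ with $\lambda = S_{11}$. It remains only to verify $\lambda > 0$: since not all $f_i$ vanish, $S \neq 0$, and as $n\lambda = \sum_p S_{pp} = \sum_i \|f_i\|^2 > 0$ we obtain $\lambda > 0$ (equivalently $S = \lambda I_n$ has rank $n$, so the $f_i$ span $\mathbb{R}^n$ and indeed form a frame). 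This yields the stated equivalence in both directions.

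I do not expect a genuine obstacle here; the content is essentially bookkeeping, matching the two coordinate blocks of the diagram vector to the off-diagonal and diagonal parts of the frame operator. The only points requiring a little care are confirming that the full list of pairwise equalities $S_{pp} = S_{qq}$ forces all diagonal entries to be equal, and tracking that the fixed nonzero normalizing constants never interfere with the equivalence.
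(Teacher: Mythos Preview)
Your proposal is correct and follows essentially the same approach as the paper: both identify the tight-frame condition $S=\lambda I_n$ with the entrywise conditions $\sum_\ell f_\ell(i)f_\ell(j)=\lambda\delta_{ij}$ and match these to the vanishing of the difference-of-squares and product blocks of $\sum_i\tilde f_i$. Your write-up is in fact more explicit than the paper's, in particular spelling out why $\lambda>0$ via the trace, which the paper leaves implicit.
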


\begin{proof}
The sequence \(\{f_i\}_{i=1}^k\subset\mathbb{R}^n\) is a tight frame if and only if its frame operator \(S = \lambda I_n \), for some $\lambda > 0$.  
Consequently, we have
\begin{equation*}
\sum_{\ell=1}^k f_\ell(i)f_\ell(j)=
\begin{cases}
\lambda, \text{ if } i = j, \\
0, \text{ if } i \neq j.
\end{cases}
\end{equation*}
Equivalently, we get \(\sum_{i=1}^k\tilde{f_i}=0\). 
\end{proof}

\begin{proposition}\label{Jprop:RnDVproduct}
For any \(f,g\in\mathbb{R}^n\),  \(  (n-1) \langle\tilde{f},\tilde{g}\rangle=n\langle f,g\rangle^2-\|f\|^2\|g\|^2\).
\end{proposition}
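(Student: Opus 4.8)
The plan is to prove the identity by a direct coordinate computation, reading the components of $\tilde{f}$ and $\tilde{g}$ off Definition \ref{Jdefn:RnDV}. Writing $\langle\tilde{f},\tilde{g}\rangle$ as a sum over the two blocks of coordinates (the differences of squares and the products) and pulling out the common factor $\frac{1}{n-1}$, it suffices to establish
\[\sum_{i<j}\gp{f^2(i)-f^2(j)}\gp{g^2(i)-g^2(j)} + 2n\sum_{i<j}f(i)f(j)g(i)g(j) = n\inpro{f,g}^2 - \norm{f}^2\norm{g}^2,\]
where both sums run over all pairs $1 \le i < j \le n$. I would treat the two sums separately and then add.

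For the first sum I would expand each summand as $f^2(i)g^2(i) - f^2(i)g^2(j) - f^2(j)g^2(i) + f^2(j)g^2(j)$ and collect terms. The crucial bookkeeping step is to count how often a diagonal term $f^2(k)g^2(k)$ is produced: it appears once as the $i$-term for each $j>k$ and once as the $j$-term for each $i<k$, hence exactly $(n-k)+(k-1) = n-1$ times. The off-diagonal contributions combine to $\sum_{i\ne j} f^2(i)g^2(j) = \norm{f}^2\norm{g}^2 - \sum_k f^2(k)g^2(k)$, so the first sum equals $n\sum_k f^2(k)g^2(k) - \norm{f}^2\norm{g}^2$.

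For the second sum I would use the elementary expansion $\inpro{f,g}^2 = \sum_k f^2(k)g^2(k) + 2\sum_{i<j}f(i)g(i)f(j)g(j)$, which rearranges to $2\sum_{i<j}f(i)f(j)g(i)g(j) = \inpro{f,g}^2 - \sum_k f^2(k)g^2(k)$. Multiplying by $n$ gives $n\inpro{f,g}^2 - n\sum_k f^2(k)g^2(k)$. Adding this to the value found for the first sum, the diagonal terms $\pm\, n\sum_k f^2(k)g^2(k)$ cancel, leaving exactly $n\inpro{f,g}^2 - \norm{f}^2\norm{g}^2$, as desired.

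The computation itself is routine; the only genuinely delicate point is the multiplicity count in the first sum, since it is precisely the factor $n-1$ arising there (together with the weight $\sqrt{2n}$ on the product coordinates) that makes the two blocks recombine into the clean expression $n\inpro{f,g}^2 - \norm{f}^2\norm{g}^2$. Verifying that the normalizations in Definition \ref{Jdefn:RnDV} are exactly the ones producing this cancellation is what I would check most carefully.
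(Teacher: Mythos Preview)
Your argument is correct and is essentially the same direct coordinate computation as the paper's, just run in the opposite direction: the paper starts from $n\inpro{f,g}^2-\norm{f}^2\norm{g}^2$, expands, and rewrites the diagonal part as $\sum_{i<j}(f^2(i)-f^2(j))(g^2(i)-g^2(j))$, whereas you start from $(n-1)\inpro{\tilde f,\tilde g}$ and expand that sum. The only difference is that you make the $(n-1)$ multiplicity count explicit, while the paper absorbs that step into the passage between its third and fourth displayed lines.
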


\begin{proof}
For any vectors \(f\) and \(g\) in \(\mathbb{R}^n\), we have 
\begin{eqnarray*}
&&  n\langle f,g\rangle^2-\|f\|^2\|g\|^2 \\
 & = & n\left(\sum_{i=1}^n f(i)g(i)\right)^2-\left(\sum_{i=1}^n f^2(i)\right)\left(\sum_{i=1}^n g^2(i)\right) \\
& =& 2n\sum_{1\le i<j\le n} f(i)f(j)g(i)g(j)+n\sum_{i=1}^n f^2(i)g^2(i)  -\sum_{i=1}^n\sum_{j=1}^nf^2(i)g^2(j)\\
&=& 2n\sum_{1\le i<j\le n} f(i)f(j)g(i)g(j) + \sum_{1\le i<j\le n}\left(f^2(i)-f^2(j)\right)\left(g^2(i)-g^2(j)\right)\\
&=&  (n-1) \langle\tilde{f},\tilde{g}\rangle .
 \end{eqnarray*}
\end{proof}

\begin{remark}\label{rmk1}
From Proposition  \ref{Jprop:RnDVproduct}  it is immediate that 
if $\|f\|=1$ then $\| \tilde{f} \|= 1$. 
Suppose  $ S^{k}  := \Set{f \in \R^{k+1} \, :\, \norm{f} = 1 }$ is a sphere of   radius $1$ in $\R^{k+1}.$ The  assignment of a diagram vector $\tilde{f}$ to every unit vector in $\R^n$ can be thought of as an operator $D$ from 
$ S^{n-1} (1)$ into $ S^{n(n-1)-1} ( 1) $.  Moreover, $D$ is surjective if and only if $n=2$. 
For example, there is no unit vector $\displaystyle f = \bk{ \bmt a \\ b\\c \emt}  \in \R^3$ such that 
\[\tilde{f} = \frac{1}{\sqrt{2}} (0, 0, \sqrt{11}/4, 1, 1/2, 1/4) \in  S^5, \] since there are no $a, b, c \in \R$ such that 
$a^2 - b^2 = 0, \  \sqrt{6} ab =1, \  \sqrt{6} ac =1/2$, and $\sqrt{6} bc =1/4$. 
Also $D$ is not injective since $f$ and $-f$ have the same diagram vector. 
\end{remark}

For completeness, we present a definition of the associated diagram vector in \(\mathbb{C}^n\), which has  properties  similar  to Propositions   \ref{Jprop:RnDVtight}  and \ref{Jprop:RnDVproduct}. 
 
\begin{definition}\label{Jdefn:CnDV}
For any vector \(f\in\mathbb{C}^n\), we define the diagram vector associated with \(f\), denoted \(\tilde{f}\), by
\begin{equation*}
\tilde{f}= 
\frac{1}{\sqrt{n-1}}
\begin{bmatrix}f(1) \overline{f(1)}-f(2)\overline{f(2)} \\  \vdots  \\ f(n-1)\overline{f(n-1)}-f(n)\overline{f(n)} \\  
\sqrt{n}f(1) \overline{f(2)} \\ \sqrt{n} \overline{f(1)} f(2) \\ \vdots  \\  \sqrt{n}f(n-1)\overline{f(n)} 
\\ \sqrt{n} \overline{f(n-1)} f(n)
 \end{bmatrix}\in\mathbb{C}^{3n(n-1)/2},
\end{equation*}
where the difference of the form 
$f(i) \overline{f(i)} - f(j) \overline{f(j)}$ occurs exactly once for \(i < j, \ i = 1, 2, \cdots, n-1\)  and the 
 product of the form \(f(i)  \overline{f(j)} \)  occurs exactly once for \(i  \neq j.\) 
\end{definition}

By the same reasoning as in the proofs of Propositions    \ref{Jprop:RnDVtight}  and \ref{Jprop:RnDVproduct}, we have the following. 

\begin{proposition}\label{Jprop:CnDVtight}
Let \(\{f_i\}_{i=1}^k\) be a sequence of vectors in \(\mathbb{C}^n\), not all of which are zero. Then \(\{f_i\}_{i=1}^k\) is a tight frame  if and only if \(\sum_{i=1}^k\tilde{f_i}=0\). 
\end{proposition}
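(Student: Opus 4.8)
The plan is to mirror the proof of Proposition~\ref{Jprop:RnDVtight}, replacing the real quadratic expressions by their Hermitian analogues while keeping careful track of the fact that the frame operator is now a Hermitian matrix. First I would recall that $\{f_i\}_{i=1}^k$ is a tight frame exactly when its frame operator satisfies $S=\lambda I_n$ for some $\lambda>0$. Writing $S=\theta^*\theta$ in coordinates with respect to an orthonormal basis, its entries are $S_{ij}=\sum_{\ell=1}^k f_\ell(i)\overline{f_\ell(j)}$, so that $S$ is Hermitian with $S_{ji}=\overline{S_{ij}}$ and with real, nonnegative diagonal entries $S_{ii}=\sum_\ell |f_\ell(i)|^2$.

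Next I would compute $\sum_{\ell=1}^k \tilde{f_\ell}$ entry by entry against Definition~\ref{Jdefn:CnDV}. Summing the difference entries over $\ell$ produces, up to the factor $1/\sqrt{n-1}$, the quantities $S_{ii}-S_{jj}$ for $i<j$, while summing the product entries produces, up to the factor $\sqrt{n}/\sqrt{n-1}$, the quantities $S_{ij}$ for every ordered pair $i\neq j$. Hence $\sum_{\ell=1}^k \tilde{f_\ell}=0$ holds if and only if $S_{ii}=S_{jj}$ for all $i<j$ and $S_{ij}=0$ for all $i\neq j$; that is, if and only if all diagonal entries of $S$ coincide in a common value $\lambda$ and all off-diagonal entries vanish, which is precisely the statement $S=\lambda I_n$.

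It remains to pin down the sign of $\lambda$, which is where the hypothesis that not all $f_i$ vanish enters. Since $\langle Sf,f\rangle=\sum_\ell |\langle f,f_\ell\rangle|^2\geq 0$, the operator $S$ is positive semidefinite, and its common diagonal value satisfies $n\lambda=\operatorname{tr}(S)=\sum_\ell \|f_\ell\|^2>0$; thus $\lambda>0$ and $S=\lambda I_n$ indeed describes a tight frame. I expect the only genuine work to be bookkeeping: one must confirm that the two product entries $f(i)\overline{f(j)}$ and $\overline{f(i)}f(j)$ appearing for each unordered pair encode exactly the vanishing of $S_{ij}$ together with its conjugate $S_{ji}$, so that the Hermitian off-diagonal conditions are captured without redundancy or omission. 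No genuinely new idea beyond the real case is needed once the entries are matched up correctly, which is why the paper can claim the result follows ``by the same reasoning.''
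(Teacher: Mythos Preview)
Your proposal is correct and follows exactly the approach the paper indicates: the paper does not give a separate proof but simply says the result follows ``by the same reasoning'' as in the real case, and your argument is precisely that reasoning carried out for the Hermitian frame operator, with the entries of $\sum_\ell \tilde f_\ell$ matched to $S_{ii}-S_{jj}$ and to the off-diagonal $S_{ij}$, $S_{ji}$. Your added justification that $\lambda>0$ via $\operatorname{tr}(S)=\sum_\ell\|f_\ell\|^2>0$ makes explicit the role of the not-all-zero hypothesis, which the paper's terse real-case proof leaves implicit.
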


\begin{proposition}\label{Jprop:CnDVproduct}
For any \(f,g\in\mathbb{C}^n\),  \(  (n-1) \langle\tilde{f},\tilde{g}\rangle=n | \langle f,g\rangle |^2-\|f\|^2\|g\|^2\).
\end{proposition}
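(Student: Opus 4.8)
The plan is to follow the computation in the proof of Proposition~\ref{Jprop:RnDVproduct}, but carried out over $\mathbb{C}$ with due care for complex conjugation. Writing the Hilbert space inner product on $\mathbb{C}^{3n(n-1)/2}$ as $\langle u,v\rangle=\sum_k u(k)\overline{v(k)}$, I would compute $\langle\tilde{f},\tilde{g}\rangle$ directly from Definition~\ref{Jdefn:CnDV} by splitting the sum into the block of \emph{difference} coordinates $f(i)\overline{f(i)}-f(j)\overline{f(j)}$ (for $i<j$) and the block of \emph{product} coordinates $\sqrt{n}\,f(i)\overline{f(j)}$ (for $i\neq j$), evaluating each block separately and then recombining.

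For the difference block, the key simplification is that each entry $f(i)\overline{f(i)}=|f(i)|^2$ is real, so its conjugate is itself and no imaginary cross terms appear. Hence the contribution of this block is $\frac{1}{n-1}\sum_{i<j}(|f(i)|^2-|f(j)|^2)(|g(i)|^2-|g(j)|^2)$, which is formally identical to the difference sum appearing in the real case once $f^2(i)$ is replaced by $|f(i)|^2$. I would then invoke the elementary identity $\sum_{i<j}(u_i-u_j)(v_i-v_j)=(n-1)\sum_i u_iv_i-\sum_{i\neq j}u_iv_j$ with $u_i=|f(i)|^2$ and $v_i=|g(i)|^2$, exactly as in the proof of Proposition~\ref{Jprop:RnDVproduct}.

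For the product block, note that Definition~\ref{Jdefn:CnDV} records both $f(i)\overline{f(j)}$ and $\overline{f(i)}f(j)=f(j)\overline{f(i)}$, so these coordinates range over all \emph{ordered} pairs $i\neq j$. The inner product pairs $\sqrt{n}\,f(i)\overline{f(j)}$ with $\overline{\sqrt{n}\,g(i)\overline{g(j)}}=\sqrt{n}\,\overline{g(i)}\,g(j)$, whose product is $n\,a_i\overline{a_j}$ where $a_i:=f(i)\overline{g(i)}$. Summing over all ordered $i\neq j$ and using $\langle f,g\rangle=\sum_i a_i$ gives $\sum_{i\neq j}a_i\overline{a_j}=|\langle f,g\rangle|^2-\sum_i|f(i)|^2|g(i)|^2$, so this block contributes $\frac{n}{n-1}\left(|\langle f,g\rangle|^2-\sum_i|f(i)|^2|g(i)|^2\right)$.

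Finally I would add the two blocks, multiplied through by $n-1$. The product block supplies $n|\langle f,g\rangle|^2-n\sum_i|f(i)|^2|g(i)|^2$, while the difference block, after applying the identity above, supplies $(n-1)\sum_i|f(i)|^2|g(i)|^2-\sum_{i\neq j}|f(i)|^2|g(j)|^2$. Adding these, the diagonal terms $\sum_i|f(i)|^2|g(i)|^2$ combine with net coefficient $-1$ and merge with the off-diagonal terms into $-\sum_{i,j}|f(i)|^2|g(j)|^2=-\|f\|^2\|g\|^2$, leaving $(n-1)\langle\tilde{f},\tilde{g}\rangle=n|\langle f,g\rangle|^2-\|f\|^2\|g\|^2$, as claimed. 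I expect the only genuine obstacle to be conjugate bookkeeping: one must verify that the two product coordinates attached to each pair combine precisely into the full ordered double sum $\sum_{i\neq j}a_i\overline{a_j}$ (this is exactly what forces both $f(i)\overline{f(j)}$ and $\overline{f(i)}f(j)$ into the definition), and that the difference block is genuinely real so that no spurious imaginary contributions survive. Once these two points are checked, the combinatorics is identical to the real case.
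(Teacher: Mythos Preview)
Your proposal is correct and follows exactly the approach the paper intends: the paper itself does not write out a separate proof but simply states that the result follows ``by the same reasoning'' as in Proposition~\ref{Jprop:RnDVproduct}, and your argument is precisely that adaptation to $\mathbb{C}^n$ with the conjugation details filled in. Your handling of the two blocks, the identity for the difference sums, and the recombination via $a_i=f(i)\overline{g(i)}$ are all accurate.
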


Note that we can also generalize the notion of  diagram vectors to  $\Hn$ by using isomorphisms. 
In the tight frame scaling problem we only consider frames in $\R^n (\text{or } \C^n)$.

\section{Tight Frame Scaling in $\R^n (\text{or } \C^n)$}

In this section, we first give a necessary condition for tight frame scaling in $\R^n$ and \(\C^n\). Though this condition is sufficient in \(\R^2\), we provide  an example to show the condition is not sufficient in \(\R^n\) when \(n\ge 3\). 
We then use the Gramian \(\G\) of associated diagram vectors  to give a necessary and sufficient condition for tight frame scaling in \(\R^n \) or \(\C^n\).
First we note that from Definition \ref{Jdefn:RnDV}, 
 for any real or complex scalar  $d$ and  a vector $ f$ in $ \R^n (\text{or } \C^n)$,  
\begin{equation} \label{diag_eq}
\widetilde{df} = |d|^2 \widetilde{ f}.\end{equation}
The next proposition shows that 
we can always choose the scaling constants for  a unit-norm frame \(\{f_i\}_{i=1}^k \subset \R^n (\text{or } \C^n) \)  to  be nonnegative  numbers. 

\begin{proposition}\label{Jprop:TFSPformulations}
Let \(\{f_i\}_{i=1}^k\) be a unit-norm frame for \(\mathbb{R}^n (\text{or } \C^n)\). 
 There exist real or complex scalars  \(d_1,d_2,\ldots,d_k\) for which \(\{d_if_i\}_{i=1}^k\) is a tight frame for \(\mathbb{R}^n (\text{or } \C^n)\) if and only if  
there exist 
nonnegative   numbers  \(c_1,c_2,\ldots,c_k\) such that \(\{c_if_i\}_{i=1}^k\) is a tight frame for \(\mathbb{R}^n (\text{or } \C^n)\) .
\end{proposition}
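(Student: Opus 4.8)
The plan is to reduce the equivalence to the diagram-vector characterization of tightness and then exploit the scaling identity \eqref{diag_eq}, which sees a scaling coefficient only through its squared modulus $|d|^2$. The backward implication is immediate: nonnegative numbers are a special case of real or complex scalars, so if nonnegative $c_1,\ldots,c_k$ make $\{c_if_i\}_{i=1}^k$ tight, those same coefficients already serve as admissible scalars $d_i$. Thus the entire content lies in the forward implication.

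For the forward direction, suppose $\{d_if_i\}_{i=1}^k$ is a tight frame. First I would invoke Proposition \ref{Jprop:RnDVtight} in the real case (or its complex analogue, Proposition \ref{Jprop:CnDVtight}) to rewrite tightness as the single vanishing condition $\sum_{i=1}^k\widetilde{d_if_i}=0$. Applying \eqref{diag_eq} term by term converts this into $\sum_{i=1}^k|d_i|^2\widetilde{f_i}=0$, so that only the magnitudes $|d_i|$ enter. Next I would set $c_i:=|d_i|$, which are nonnegative real numbers in both the real and complex cases. Because $c_i\ge 0$ we have $|c_i|^2=c_i^2=|d_i|^2$, so \eqref{diag_eq} gives $\widetilde{c_if_i}=|d_i|^2\widetilde{f_i}$ and hence $\sum_{i=1}^k\widetilde{c_if_i}=0$. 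A second application of Proposition \ref{Jprop:RnDVtight} (or \ref{Jprop:CnDVtight}) then yields that $\{c_if_i\}_{i=1}^k$ is a tight frame, completing the argument.

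The only point requiring a little care — and the nearest thing to an obstacle — is the spanning (``not all of which are zero'') hypothesis built into the diagram-vector propositions: I must check that replacing each $d_i$ by $|d_i|$ does not destroy the frame property. This is harmless, since $c_if_i=0$ precisely when $d_if_i=0$, so the two scaled families span the same subspace; as $\{d_if_i\}_{i=1}^k$ spans $\mathbb{R}^n$ (or $\mathbb{C}^n$), so does $\{c_if_i\}_{i=1}^k$.

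As a remark, one can bypass the diagram vectors entirely and argue directly from the frame operator, noting that the frame operator of $\{d_if_i\}_{i=1}^k$ is $\sum_{i=1}^k|d_i|^2 f_if_i^*$, which depends on the $d_i$ only through $|d_i|^2$; replacing $d_i$ by $c_i=|d_i|$ leaves this operator, and hence the condition $S=\lambda I_n$, unchanged. Either route makes the reduction to nonnegative coefficients transparent, but the diagram-vector formulation is the natural one here since \eqref{diag_eq} is precisely the identity that isolates the dependence on $|d_i|^2$.
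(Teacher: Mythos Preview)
Your argument is correct and is exactly the paper's approach: set $c_i=|d_i|$ and use Equation~\eqref{diag_eq} together with the diagram-vector characterization of tightness. The extra care you take with the spanning hypothesis and the alternative frame-operator remark are sound and simply flesh out what the paper states in one line.
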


The proof of Proposition \ref{Jprop:TFSPformulations} follows from selecting $c_i = |d_i|$ and Equation (\ref{diag_eq}). 
The following lemma provides a useful condition for tight frame scaling in $\R^n$.

\begin{lemma}\label{unit_tight}
Let \(\{f_i\}_{i=1}^k\) be a unit-norm frame for \(\mathbb{R}^n (\text{or } \C^n)\).  If there exist 
nonnegative   numbers  \(c_1,c_2,\ldots,c_k\) such that \(\{c_if_i\}_{i=1}^k\) is a $\lambda$-tight frame for 
some \(\lambda \neq 0 \), then  
$$ c_1^2 + \cdots + c_k^2 = \lambda n.$$
\end{lemma}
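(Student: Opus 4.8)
The plan is to compute the trace of the frame operator of $\cf$ in two different ways and compare. Since $\cf$ is assumed to be a $\lambda$-tight frame for $\R^n$ (or $\C^n$), its frame operator $S$ equals $\lambda I_n$, and hence
\[
\operatorname{tr}(S) = \operatorname{tr}(\lambda I_n) = \lambda n.
\]
So the heart of the argument is to evaluate $\operatorname{tr}(S)$ directly in terms of the scaling constants $c_i$, and the whole lemma will drop out once I match the two expressions.

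For the second computation I would use the Gramian. Let $\theta$ be the analysis operator of the sequence $\cf$, so that $S = \theta^*\theta$ while the Gramian is $G = \theta\theta^*$. The key algebraic fact is the cyclic invariance of the trace, $\operatorname{tr}(\theta^*\theta) = \operatorname{tr}(\theta\theta^*)$, which gives $\operatorname{tr}(S) = \operatorname{tr}(G)$ even though $S$ is $n \times n$ and $G$ is $k \times k$. The diagonal entries of $G$ are exactly the squared norms of the frame vectors: the $i$-th diagonal entry is $\inpro{c_if_i, c_if_i} = c_i^2 \norm{f_i}^2$, and since each $c_i$ is a nonnegative real number and $\cf$ comes from a unit-norm frame, this equals $c_i^2$ (in the complex case one uses $c_i\overline{c_i} = c_i^2$ because $c_i \ge 0$). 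Summing the diagonal yields
\[
\operatorname{tr}(G) = \sum_{i=1}^k c_i^2.
\]
Equating the two evaluations of the trace gives $c_1^2 + \cdots + c_k^2 = \lambda n$, which is the claim.

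I do not expect any genuine obstacle here: the result is a clean trace identity, and the only points requiring a moment of care are (i) invoking $\operatorname{tr}(\theta^*\theta) = \operatorname{tr}(\theta\theta^*)$ to move between the frame operator and the Gramian, and (ii) observing that the unit-norm hypothesis together with $c_i \ge 0$ collapses each diagonal Gramian entry to $c_i^2$. An alternative route avoiding the Gramian entirely would be to expand $\operatorname{tr}(S)$ against an orthonormal basis $\{e_j\}$, writing $\operatorname{tr}(S) = \sum_j \inpro{Se_j, e_j} = \sum_{i,j} |\inpro{c_if_i, e_j}|^2 = \sum_i c_i^2 \norm{f_i}^2$, which reaches the same conclusion; either presentation works, and I would favor the Gramian version since that operator has already been introduced in the paper.
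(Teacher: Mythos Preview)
Your argument is correct and is essentially the same trace computation as the paper's: the paper sums the diagonal entries of the frame operator directly, obtaining $\sum_{j}\sum_{i} c_i^2 f_i^2(j) = \lambda n$ and then swaps the sums using $\sum_j f_i^2(j)=1$, which is exactly your ``alternative route'' via an orthonormal basis. Your preferred Gramian version is a minor repackaging of the same idea via $\operatorname{tr}(\theta^*\theta)=\operatorname{tr}(\theta\theta^*)$; both are equally valid and equally short.
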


\begin{proof}
If \(\{c_if_i\}_{i=1}^k\) is a $\lambda$-tight frame for \(\mathbb{R}^n  (\text{or } \C^n) \), then since the frame operator for \(\{c_if_i\}_{i=1}^k\)  is $\lambda I_n$, we have the trace
$$  \sum_{j=1}^n \sum_{i=1}^k c_i^2 f_i^2(j) =\lambda  n. $$
Since \(\{f_i\}_{i=1}^k\) is a unit-norm frame, for each $i$, 
$$ f_i^2(1) + \cdots + f_i^2(n) =1,  $$
which implies that  
\begin{equation*}
 c_1^2 + \cdots + c_k^2 = \lambda n.
 \end{equation*}
\end{proof}

We use Lemma \ref{unit_tight} to provide a necessary condition for tight frame scaling in $\R^n  (\text{or } \C^n)$. 

\begin{theorem}\label{Jprop:TFSPinHn}
Let \(\{f_i\}_{i=1}^k\) be a unit-norm frame for \( \R^n  (\text{or } \C^n) \). If there exist positive numbers 
\(c_1,c_2,\ldots,c_k\) such that \(\{c_i f_i\}_{i=1}^k\) is a tight frame for \( \R^n  (\text{or } \C^n) \), then there is no unit vector \(f\in \R^n  (\text{or } \C^n)\) such that \(|\langle f,f_i\rangle|\ge 1/\sqrt{n}\) for all \(i =  1, 2,  \cdots,  k\) and \(|\langle f,f_i\rangle|>1/\sqrt{n}\) for at least one \(i\). 
\end{theorem}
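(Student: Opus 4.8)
The plan is to argue by contradiction, combining the quadratic form of the frame operator of $\{c_if_i\}_{i=1}^k$ with the trace identity already established in Lemma \ref{unit_tight}. The whole argument rests on expressing the tight frame constant $\lambda$ explicitly in terms of the scaling coefficients and then exploiting the strict positivity of the $c_i$.

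First I would record the two identities that drive the proof. Since $\{c_if_i\}_{i=1}^k$ is a $\lambda$-tight frame for some $\lambda > 0$, its frame operator is $\lambda I_n$. Evaluating the associated quadratic form at an arbitrary unit vector $f$ gives
\[
\sum_{i=1}^k c_i^2 \ab{\langle f, f_i\rangle}^2 = \langle Sf, f\rangle = \lambda \norm{f}^2 = \lambda.
\]
Next, because the $f_i$ are unit-norm, Lemma \ref{unit_tight} applies and yields $\sum_{i=1}^k c_i^2 = \lambda n$, so that $\lambda = \frac{1}{n}\sum_{i=1}^k c_i^2$.

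With these in hand, I would suppose toward a contradiction that there is a unit vector $f$ with $\ab{\langle f, f_i\rangle} \ge 1/\sqrt{n}$ for all $i$ and $\ab{\langle f, f_i\rangle} > 1/\sqrt{n}$ for at least one index. Squaring gives $\ab{\langle f, f_i\rangle}^2 \ge 1/n$ for every $i$, with strict inequality for at least one $i$. Multiplying the $i$-th inequality by $c_i^2$ and summing, and using that every $c_i^2 > 0$, the strict inequality is preserved:
\[
\sum_{i=1}^k c_i^2 \ab{\langle f, f_i\rangle}^2 > \frac{1}{n}\sum_{i=1}^k c_i^2 = \lambda.
\]
This contradicts the first displayed identity, so no such $f$ can exist, which is the desired conclusion.

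The main point to get right is the use of strict positivity of the $c_i$ in the final step: it is precisely the hypothesis that the scaling constants are positive (rather than merely nonnegative) that allows the strict inequality at a single index to survive after weighting and summing. There is otherwise no real obstacle here once one sees that $\lambda = \frac{1}{n}\sum_i c_i^2$ is the natural ``average'' threshold against which the inner products are being compared; the two identities above combine to force every unit vector to have $\sum_i c_i^2 \ab{\langle f, f_i\rangle}^2$ equal to this average, leaving no room for the hypothesized excess.
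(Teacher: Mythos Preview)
Your proof is correct and follows essentially the same route as the paper: argue by contradiction, use the tight-frame identity $\lambda=\sum_i c_i^2|\langle f,f_i\rangle|^2$ at a unit vector, and contrast the resulting strict inequality with Lemma~\ref{unit_tight}. Your explicit remark that strict positivity of all $c_i$ is what preserves the strict inequality is a nice clarification the paper leaves implicit.
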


\begin{proof}
Let  \(\{c_i f_i\}_{i=1}^k\) be a  $\lambda$-tight frame. 
Suppose there is a unit vector \(f\in \R^n  (\text{or } \C^n) \) such that \(|\langle f,f_i\rangle|\ge 1/\sqrt{n}\) for all \(i =  1, 2,  \cdots,  k\) and \(|\langle f,f_i\rangle|>1/\sqrt{n}\) for at least one \(i\). 
Since \(\{c_i f_i\}_{i=1}^k\) is a  $\lambda$-tight frame and $ \norm{f} =1$, by the assumption, we have 
\begin{equation*}
\lambda=\sum_{i=1}^k  c_i^2|\langle f,f_i\rangle|^2>\frac{1}{n}\sum_{i=1}^k c_i^2, 
\end{equation*}
which contradicts Lemma \ref{unit_tight}.
\end{proof}

We show in Theorem \ref{Jthm:TFSPinR2}  the necessary condition of   Theorem \ref{Jprop:TFSPinHn} is also sufficient in \(\R^2\). 
For an counterexample of the converse of Theorem \ref{Jprop:TFSPinHn}, we  now construct a unit-norm frame  \( \f \)which can not be scaled to be  tight and  which satisfies the property that 
\begin{center}
 ``(P)  there is no unit vector \( f \in S^2 \) such that  \( |\langle f,f_i\rangle|\ge 1/\sqrt{3}\) for all \(i =  1, 2,  \cdots,  k\)."
 \end{center}
First, we consider the  unit-norm frame \(\F= \Set{f_1, \cdots,  f_5},\)
where \[
f_1 =  \gp{ \begin{matrix} 1\\0\\0\end{matrix}}, 
f_2 =  \gp{ \begin{matrix} 0\\1\\0 \end{matrix}}, 
f_3 =  \gp{ \begin{matrix} 0\\0\\1 \end{matrix}}, 
f_4 =  \gp{ \begin{matrix} 1/\sqrt{2} \\1/\sqrt{2} \\0 \end{matrix}}, 
f_5 =  \gp{ \begin{matrix} 1/\sqrt{2} \\-1/\sqrt{2} \\0 \end{matrix}}.
\]
Let
\[ C_1 =\Set{f  \in S^2 \,:\, |\langle f,f_i\rangle|\ge 1/\sqrt{3}, \  i =1, 2, 3 }
\]
 and 
\[ C_2 =\Set{f  \in S^2 \,:\, |\langle f,f_i\rangle|\ge 1/\sqrt{3}, \  i =3, 4, 5 }
.\] 
Then we have 
\[C_1 =\Set{  \gp{ \begin{matrix} \pm 1/\sqrt{3} \\   \pm 1/\sqrt{3} \\  \pm 1/\sqrt{3} \end{matrix} } }, \quad 
C_2  =\Set{  \gp{ \begin{matrix} \pm \sqrt{2/3} \\  0  \\  \pm 1/\sqrt{3} \end{matrix} }, \ 
\gp{ \begin{matrix} 0 \\ \pm \sqrt{2/3}  \\  \pm 1/\sqrt{3} \end{matrix} } } ,\]
both consisting of eight points. 
Since $C_1 \bigcap C_2 = \emptyset$, $\F$ satisfies the property (P).   
The frame $\F$ can be scaled to be tight as it is the union of two orthonormal bases of $\R^3$.  
By perturbing $f_1, f_2$ and $f_3$, we obtain a unit-nom frame which can not be scaled to be  tight and  satisfies the property (P). 
To this end, we choose $v >0 $ close to zero and let 
$ u = \sqrt{1-2v^2}. $ 
We consider 
the  unit-norm frame  \(\F_v = \Set{f_1', \cdots,  f_5'},\)
where \[
f_1'=  \gp{ \begin{matrix} u \\ v \\ v  \end{matrix}},  \
f_2'=  \gp{ \begin{matrix} v \\ u\\ v \end{matrix}},  \
f_3' =  \gp{ \begin{matrix} v \\ v  \\ u \end{matrix}},\
f_4'= f_4, \
f_5'=f_5.
\]
Let
\[ C_1' =\Set{f  \in S^2 \,:\, |\langle f,f_i' \rangle|\ge 1/\sqrt{3}, \  i = 1, 2, 3 }
\]
 and 
\[ C_2' =\Set{f  \in S^2 \,:\, |\langle f,f_i' \rangle|\ge 1/\sqrt{3}, \  i =3, 4, 5 }
.\] 
Then $C_1'$ and $C_2'$ are subsets of a closed neighborhood in $S^2$ of $C_1$ and $C_2$, respectively. 
For   $v$ sufficiently small,  we have 
 $C_1' \bigcap C_2' = \emptyset$, which implies that $\F_v$ satisfies the property (P).   
 We now show that $\F_v$  can not be scaled to be tight. 
 For fixed $v$, suppose that 
 there exist positive numbers 
\(c_1, \ldots,c_5\) such that  for any $f\in \R^3$, 
\( \sum_{i=1}^5 \inpro{ c_i f_i , f } c_i f_i = f. \) 
Comparing coefficients, this 
  implies that 
 \( v^2 c_1^2 + uv c_2^2 + uv c_3^2 =0. \) 
 This contradicts the condition that $u, v, c_1, c_2$ and $c_3$ are positive.  
 The following figure shows the parallelepiped  determined by the perturbed frame $\F_v$ along with $C_1'$ and $C_2'$ when $v = 1/8$. 
 
 \begin{figure}
\begin{center}
\includegraphics[width=0.7\linewidth]{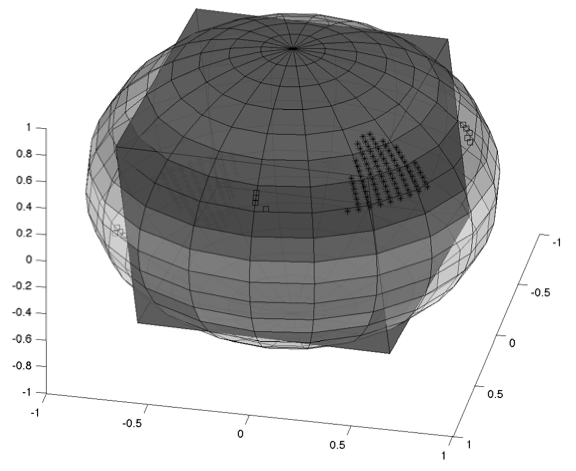} 
 \caption{\em  The symbol * represents the elements of $C_1'$ and the symbol $\Box$ represents the elements of $C_2'$}
\end{center}
\end{figure}

A similar construction can be given in \(\R^n\) for \(n \ge 4\).

\begin{remark} 
For a geometric description of the conclusion of Theorem \ref{Jprop:TFSPinHn},  
we define for  $f\in \R^n$, 
$$ cone(f) : =  \Set{ g \in \R^n \,:\,  | \inpro{g, f} | \ge  1/ \sqrt{n}}. $$
Then the conclusion of Theorem \ref{Jprop:TFSPinHn} is equivalent to 
$$ S^{n-1}(1) \bigcap \gp{\bigcap_{i=1}^k   cone(f_i)} \bigcap \gp{ \bigcup_{i=1}^k cone(f_i)^\circ} = \emptyset,  $$
where $cone(f_i)^\circ$ is the interior of $cone(f_i)$. 
\end{remark}

\begin{corollary}\label{cor2}
Let  \(\{f_i\}_{i=1}^k\) be  a unit-norm frame for \( \R^n\).  
If  $ \displaystyle \bigcap_{i=1}^k   cone(f_i)^\circ  \neq  \emptyset$, then 
there do not exist nonzero scalars \(c_1,c_2,\ldots,c_k\) such that \(\{c_i f_i\}_{i=1}^k\) is a tight frame for \(\mathbb{R}^n\). 
\end{corollary}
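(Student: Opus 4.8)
The plan is to read this off Theorem~\ref{Jprop:TFSPinHn} by contraposition, after first normalizing the scaling coefficients. Suppose, toward a contradiction, that there are nonzero scalars $c_1,\ldots,c_k$ for which $\{c_if_i\}_{i=1}^k$ is a tight frame. First I would invoke Proposition~\ref{Jprop:TFSPformulations} (equivalently, apply Equation~(\ref{diag_eq}) with $d=c_i$): replacing each $c_i$ by $|c_i|>0$ leaves the frame tight, so I may assume the scalars are in fact \emph{positive}. It therefore suffices to show that if $\{f_i\}_{i=1}^k$ admits a positive scaling to a tight frame, then $\bigcap_{i=1}^k \cone{f_i}^\circ$ contains no unit vector.

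Next I would extract from the hypothesis exactly the witness that the conclusion of Theorem~\ref{Jprop:TFSPinHn} forbids. Taking a unit vector $g$ in $\bigcap_{i=1}^k \cone{f_i}^\circ$ (as in the geometric picture of the preceding Remark, the relevant intersection is understood on the sphere $S^{n-1}$), the definition of $\cone{f_i}^\circ$ gives $|\langle g,f_i\rangle|>1/\sqrt{n}$ for every $i$. In particular $|\langle g,f_i\rangle|\ge 1/\sqrt n$ for all $i$ with strict inequality for at least one (indeed every) $i$, which is precisely the configuration that Theorem~\ref{Jprop:TFSPinHn} excludes once a positive tight scaling exists. Thus the existence of $g$ is incompatible with the assumed positive tight scaling.

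Concretely, the contradiction is the same trace estimate that powers Theorem~\ref{Jprop:TFSPinHn}: if $\{c_if_i\}_{i=1}^k$ is $\lambda$-tight with each $c_i>0$, then evaluating the frame expansion at the unit vector $g$ yields
\[
\lambda \;=\; \sum_{i=1}^k c_i^2\,|\langle g,f_i\rangle|^2 \;>\; \frac{1}{n}\sum_{i=1}^k c_i^2 \;=\; \lambda ,
\]
where the strict inequality uses $|\langle g,f_i\rangle|^2>1/n$ and the final equality is Lemma~\ref{unit_tight}. This is absurd, so no positive scaling to a tight frame can exist, and hence by the reduction in the first paragraph no nonzero scaling can either.

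The only real content beyond bookkeeping is the normalization handled in the second paragraph, and that is where I expect the subtlety to lie: the strict inequalities $|\langle g,f_i\rangle|>1/\sqrt n$ must be applied to a genuine \emph{unit} vector, so the witnessing point of $\bigcap_{i=1}^k \cone{f_i}^\circ$ should be taken on $S^{n-1}$ rather than at an arbitrary point of $\R^n$. (Scaling a point away from the origin preserves the inequalities, but scaling it toward the origin can destroy them, so the sphere is essential for matching the hypothesis of Theorem~\ref{Jprop:TFSPinHn}.) With that reading the corollary is immediate.
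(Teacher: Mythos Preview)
Your proposal is correct and matches the paper's (unwritten) argument: the corollary is placed immediately after the Remark that recasts Theorem~\ref{Jprop:TFSPinHn} in terms of the sets $\cone{f_i}$, and your reduction from nonzero to positive scalars via Proposition~\ref{Jprop:TFSPformulations} followed by an appeal to Theorem~\ref{Jprop:TFSPinHn} (or, equivalently, the direct trace estimate using Lemma~\ref{unit_tight}) is exactly what is intended. Your care about restricting the witness to $S^{n-1}$ is warranted and in fact necessary---read literally over all of $\R^n$ the hypothesis $\bigcap_i\cone{f_i}^\circ\neq\emptyset$ is satisfied even by the standard orthonormal basis (e.g.\ $(1,1)\in\cone{e_1}^\circ\cap\cone{e_2}^\circ$ in $\R^2$), so the statement only holds with the implicit restriction to unit vectors inherited from the preceding Remark.
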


As  a consequence of Corollary \ref{cor2}, for example,  two vectors in $\R^2$ from the same open quadrant or three vectors in 
$\R^3$ from the same open octant  cannot be scaled to a tight frame. 


We use Gramian of associated diagram vectors of the unit-norm frame to provide a necessary and sufficient condition for the tight frame scaling in $\R^n (\text{or } \C^n)$. 
For $\f\subset  \R^n (\text{or } \C^n)$, the Gramian operator $G$ is the $k\times k$ matrix defined by
\[
G= \theta \theta^* =  \gp{ \inpro{f_i, f_j}}_{i,j=1}^k ,
\]
where $\theta$ and $\theta^*$ are the corresponding analysis and synthesis operator of $\f$.  
 We note that the Gramian matrix is a Hermitian and positive semidefinite matrix.

\begin{remark}\label{rmk2}
Let $A$ be a Hermitian positive semidefinite matrix and $x \in \C^n$. Then 
$x^* A x =0$ if and only if $Ax=0$ (\cite{hj}, p.400).  Moreover, if $A$ is positive definite then $x^* A x = 0$ if and only if $x=0$. 
\end{remark}

Let $\G = \gp{ \inpro{\tilde{f_i},\tilde{f_j}}}_{i,j=1}^k $ be the Gramian associated to the diagram vectors 
 $\Set{\tilde{f}_i}_{i=1}^k$. 
We  note that if  $\f\subset \R^n (\text{or } \C^n)$ is a set of unit-norm vectors, then by  Proposition 
\ref{Jprop:RnDVproduct}, we have $(n-1)\G_{ij} = n |G_{ij}|^2 - 1$, which implies that \(\G\) is a real matrix. 
We also have the following characterization of tight frames using the Gramian $\G$.

\begin{proposition}\label{Gpro}
Let  \(\{f_i\}_{i=1}^k\) be  a unit-norm frame for \( \R^n (\text{or } \C^n)\) and  $c_1, \cdots, c_k$ be nonnegative numbers, which are not all zero. 
Let $\G$ be the Gramian associated to the diagram vectors $\Set{\tilde{f}_i}_{i=1}^k$. 
Then  $\{c_i f_i\}_{i=1}^k $ is a tight frame  for $\R^n (\text{or } \C^n)$ if and only if 
 $f = \left[
\begin{array}{c}
c_1^2\\
\vdots\\
c^2_k
\end{array}\right] $ belongs to the null space of $\G$. 
\end{proposition}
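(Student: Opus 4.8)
The plan is to translate the tight-frame condition into a single linear condition on the vector $f=(c_1^2,\dots,c_k^2)^T$ and then exploit the positive semidefiniteness of $\G$ recorded in Remark \ref{rmk2}, in exact parallel with the analysis/synthesis duality used for the ordinary Gramian.

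First I would reduce everything to a statement about diagram vectors. Since each $c_i\ge 0$, Equation (\ref{diag_eq}) gives $\widetilde{c_if_i}=|c_i|^2\tilde{f_i}=c_i^2\tilde{f_i}$. Because the $c_i$ are not all zero and each $f_i$ is a unit vector, the scaled family $\{c_if_i\}_{i=1}^k$ is not identically zero, so Proposition \ref{Jprop:RnDVtight} (respectively Proposition \ref{Jprop:CnDVtight} in the complex case) applies and shows that $\{c_if_i\}_{i=1}^k$ is a tight frame if and only if $\sum_{i=1}^k c_i^2\tilde{f_i}=0$.

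Next I would package this sum through the synthesis operator of the diagram vectors. Let $\tilde{\theta}^*$ denote the synthesis operator whose columns are $\tilde{f_1},\dots,\tilde{f_k}$, and let $\tilde{\theta}$ be its adjoint, so that $\tilde{\theta}^* f=\sum_{i=1}^k c_i^2\tilde{f_i}$ and $\G=\tilde{\theta}\tilde{\theta}^*$ is precisely the Gramian of the diagram vectors. The tightness condition from the previous step then reads $\tilde{\theta}^* f=0$. To connect this to the null space of $\G$, I would compute $f^*\G f=f^*\tilde{\theta}\tilde{\theta}^* f=\norm{\tilde{\theta}^* f}^2$, so that $\tilde{\theta}^* f=0$ is equivalent to $f^*\G f=0$. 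Since $\G$ is Hermitian positive semidefinite, Remark \ref{rmk2} yields $f^*\G f=0$ if and only if $\G f=0$, i.e. $f$ lies in the null space of $\G$. Chaining these equivalences gives the claim.

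The operator bookkeeping is routine; the one place to be careful is the application of Proposition \ref{Jprop:RnDVtight}, whose hypothesis that the vectors are not all zero is exactly what forces the common tightness constant $\lambda$ to be strictly positive, and this is where the assumption that the $c_i$ are not all zero enters. I would also note that the positive-semidefinite reduction via Remark \ref{rmk2} is applied to the real vector $f$ against the real matrix $\G$ (as observed before the proposition, $(n-1)\G_{ij}=n|G_{ij}|^2-1$), so the same argument covers both the real and complex cases without change. I anticipate no serious obstacle beyond these bookkeeping points.
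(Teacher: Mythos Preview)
Your proposal is correct and follows essentially the same route as the paper: reduce tightness to $\sum_{i=1}^k c_i^2\tilde{f_i}=0$ via Propositions \ref{Jprop:RnDVtight}/\ref{Jprop:CnDVtight} and Equation (\ref{diag_eq}), rewrite this as the vanishing of the quadratic form $f^*\G f$, and invoke Remark \ref{rmk2}. The only cosmetic difference is that you phrase the quadratic-form identity through the synthesis operator $\tilde{\theta}^*$ of the diagram vectors, whereas the paper writes $\left\langle \sum_i c_i^2\tilde{f_i},\sum_i c_i^2\tilde{f_i}\right\rangle=f^T\G f$ directly.
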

\begin{proof}
The set  $\{c_i f_i\}_{i=1}^k \subset \R^n (\text{or } \C^n)$ is a tight frame 
 if and only if 
  $\sum_{i=1}^k  \widetilde{c_if_i} = \sum_{i=1}^k  c_i^2 \tilde{f_i}=0 $ 
  if and only if 
  $\displaystyle  \left\langle \sum_{i=1}^k  c_i^2 \tilde{f_i}  ,  \sum_{i=1}^k  c_i^2 \tilde{f_i}\right\rangle =f^T \G f  =0 $. 
  From Remark \ref{rmk2}, since $\G$ is positive semidefinite, we conclude that $\G f =0$. 
\end{proof}

\begin{ob} \label{ob1}
From Remark \ref{rmk2}, if  $\G$ is invertible  then $f^T \G f = 0$ if and only if $f = 0$. 
From Proposition \ref{Gpro} we conclude that if $\G$ is invertible, then there do not exist 
nonnegative numbers \(c_1, c_2, \cdots, c_k\) which are not all zero so that $\{c_i f_i\}_{i=1}^k $ is  a tight frame. 
Therefore, it is enough to consider the case where $\G$ is not invertible. 
\end{ob}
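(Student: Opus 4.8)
The plan is to obtain Observation \ref{ob1} as an immediate corollary of Proposition \ref{Gpro} together with the elementary linear algebra recorded in Remark \ref{rmk2}. The first step is to upgrade the hypothesis ``$\G$ is invertible'' to the stronger statement ``$\G$ is positive definite.'' Since $\G = \gp{\inpro{\tilde{f_i},\tilde{f_j}}}_{i,j=1}^k$ is a Gramian matrix of the diagram vectors, it is Hermitian and positive semidefinite; an invertible positive semidefinite matrix has all eigenvalues strictly positive and is therefore positive definite. By the second sentence of Remark \ref{rmk2}, for such a matrix the quadratic form satisfies $f^T \G f = 0$ if and only if $f = 0$; equivalently, the null space of $\G$ is trivial. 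This justifies the first assertion of the Observation.

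Second, I would apply Proposition \ref{Gpro} to the given nonnegative numbers $c_1, \ldots, c_k$. That proposition identifies the statement ``$\{c_i f_i\}_{i=1}^k$ is a tight frame'' with the condition that the vector $f = (c_1^2, \ldots, c_k^2)^T$ lie in the null space of $\G$. Combining this with the triviality of $\ker \G$ established in the first step, a tight frame could occur only if $f = 0$, that is, only if $c_i^2 = 0$, and hence $c_i = 0$, for every $i$.

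Finally, this conclusion contradicts the standing assumption that the $c_i$ are not all zero, so no admissible scaling can produce a tight frame when $\G$ is invertible; the practical reduction---that it suffices to analyze the case where $\G$ is singular---then follows at once. I do not expect any genuine obstacle here, since the argument is a short chain of implications assembled from results already in hand. The only point requiring a moment's care is the passage from invertibility to positive definiteness, which is precisely what makes the ``definite'' half of Remark \ref{rmk2} applicable and thereby forces $f = 0$ rather than merely $f \in \ker \G$.
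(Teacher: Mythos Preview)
Your proposal is correct and follows essentially the same route as the paper: the Observation itself is already its own proof, combining Remark~\ref{rmk2} (positive definiteness of an invertible Gramian forces $f^T\G f=0\Rightarrow f=0$) with Proposition~\ref{Gpro} to rule out any nontrivial nonnegative scaling. Your only addition is the explicit remark that an invertible positive semidefinite matrix is positive definite, which is the natural one-line justification the paper leaves implicit.
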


For a given subset $K$ in $\R^n$, let $\text{conv} (K)$ be the convex hull of $K$. 
We then have the following result on linear inequalities. 

\begin{lemma}[\cite{cheney}, p.19]\label{convthm}
Let $K$ be a compact subset of $\R^n$. There exists $g \in \R^n$  such that $\inpro{g, f} >0$ for all $f \in K$ if and only if $0 \notin  {\rm conv}(K)$.
\end{lemma}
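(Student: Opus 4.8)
The plan is to prove the two implications separately, with the forward direction being elementary and the reverse direction resting on the existence of a nearest-point projection onto a compact convex set.

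For the forward direction, suppose there exists $g \in \R^n$ with $\inpro{g,f} > 0$ for every $f \in K$. I would argue by contradiction. If $0 \in \text{conv}(K)$, then $0 = \sum_{j=1}^m \lambda_j f_j$ for some $f_j \in K$ and $\lambda_j \ge 0$ with $\sum_j \lambda_j = 1$. Pairing with $g$ gives $0 = \inpro{g,0} = \sum_{j=1}^m \lambda_j \inpro{g,f_j}$. But each $\inpro{g,f_j}$ is strictly positive and at least one $\lambda_j$ is strictly positive, so the right-hand side is strictly positive, a contradiction. Note that this direction uses neither compactness nor finite-dimensionality.

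For the reverse direction, suppose $0 \notin \text{conv}(K)$. The key structural fact I would establish first is that $\text{conv}(K)$ is itself compact. In $\R^n$ this follows from Carath\'eodory's theorem: every point of $\text{conv}(K)$ is a convex combination of at most $n+1$ points of $K$, so $\text{conv}(K)$ is the continuous image of the compact set $\Delta_n \times K^{n+1}$, where $\Delta_n$ denotes the simplex of coefficient vectors, and is therefore compact and in particular closed. Writing $C := \text{conv}(K)$, I now have a nonempty closed convex set with $0 \notin C$. Let $p$ be the point of $C$ nearest to the origin; it exists by compactness and is nonzero, since $p = 0$ would force $0 \in C$. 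Setting $g = p$, the variational characterization of the nearest-point projection yields $\inpro{0 - p,\, f - p} \le 0$ for every $f \in C$, which rearranges to $\inpro{p,f} \ge \inpro{p,p} = \norm{p}^2 > 0$. Because $K \subseteq C$, this gives $\inpro{g,f} > 0$ for all $f \in K$, as required.

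The main obstacle is the reverse direction, and specifically the passage from ``the origin is not in the convex hull'' to a \emph{strict} separation. This is exactly where the hypothesis that $K$ is compact is needed: through Carath\'eodory it guarantees that $\text{conv}(K)$ is closed, so that the nearest point $p$ is attained and the functional $g = p$ delivers a uniform strictly positive lower bound $\norm{p}^2$ over all of $C$. Were $\text{conv}(K)$ merely convex but not closed, the origin could sit on its boundary and only the non-strict inequality $\inpro{g,f} \ge 0$ would survive. The remaining ingredients, namely the projection inequality and the Carath\'eodory compactness argument, are standard, so the proof chiefly consists of assembling these facts in the correct order.
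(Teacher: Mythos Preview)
Your proof is correct. The paper itself does not give a proof of this lemma; it is quoted verbatim from Cheney's book and used as a black box. So there is no ``paper's own proof'' to compare against---your argument stands on its own, and it is the standard one: linearity for the forward implication, and for the reverse implication Carath\'eodory's theorem to ensure $\mathrm{conv}(K)$ is compact, followed by the nearest-point projection to produce a strictly separating functional.
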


We now prove the main theorem for  tight frame scaling problem in $\R^n (\text{or } \C^n)$.  

\begin{theorem}\label{mthm:tfs}
Let $\f$ be a unit-norm frame for $\R^n  (\text{or } \C^n)$ and let 
$\G $ be the Gramian associated to the diagram vectors $\Set{\tilde{f_i}}_{i=1}^k$.
Suppose $\G$ is not invertible. Let  $\Set{v_1,\ldots,v_l}$ be any basis for $ null (\G)$ and $r_i :=
\left[
\begin{array}{c}
v_1 (i)\\
\vdots\\
v_l (i)
\end{array}\right]$ for $i=1,\ldots,k$. 
Then there exist $c_i>0, \ i =1, \cdots, k$  so that $\{c_if_i\}_{i=1}^k$ is a tight frame if and only if $0 \not\in  {\rm conv}  \Set{r_1,\ldots,r_k}$.
\end{theorem}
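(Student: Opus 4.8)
The plan is to deduce the statement directly from Proposition~\ref{Gpro}, which recasts tight frame scaling as membership of the vector $f = (c_1^2, \ldots, c_k^2)^T$ in $null(\G)$, together with Lemma~\ref{convthm}, the separation statement for $0$ and a convex hull. The bridge between the two is a single change of coordinates: write a vector of $null(\G)$ in the chosen basis $\Set{v_1, \ldots, v_l}$ and observe that its $i$-th entry is precisely the inner product of the coefficient vector with $r_i$.

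First I would set up this dictionary. Let $V$ be the $k \times l$ matrix whose $j$-th column is $v_j$; by the definition of $r_i$, the $i$-th row of $V$ is $r_i^T$. Since $\G$ is a real matrix (as noted after Proposition~\ref{Jprop:RnDVproduct}), $null(\G)$ is a real subspace of $\R^k$, so the basis vectors $v_j$, and hence each $r_i$, are real and $r_i \in \R^l$. A vector $f \in \R^k$ lies in $null(\G) = \Span{v_1, \ldots, v_l}$ if and only if $f = Va$ for some $a \in \R^l$, and in that case $f(i) = \inpro{a, r_i}$ for every $i$. This is the only computation required, and it is routine.

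For the forward implication, suppose $c_i > 0$ make $\{c_i f_i\}_{i=1}^k$ a tight frame. Proposition~\ref{Gpro} gives $f = (c_1^2, \ldots, c_k^2)^T \in null(\G)$, so $f = Va$ for some $a \in \R^l$ and hence $\inpro{a, r_i} = c_i^2 > 0$ for all $i$. Applying Lemma~\ref{convthm} with $g = a$ and $K = \Set{r_1, \ldots, r_k}$ (a compact set, being finite) forces $0 \notin {\rm conv}\Set{r_1, \ldots, r_k}$. For the converse, suppose $0 \notin {\rm conv}\Set{r_1, \ldots, r_k}$. Lemma~\ref{convthm} then supplies $a \in \R^l$ with $\inpro{a, r_i} > 0$ for every $i$; setting $c_i := \sqrt{\inpro{a, r_i}} > 0$ gives $(c_1^2, \ldots, c_k^2)^T = Va \in null(\G)$ with the $c_i$ not all zero, so Proposition~\ref{Gpro} shows $\{c_i f_i\}_{i=1}^k$ is a tight frame.

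The one point deserving care — and the main obstacle, modest as it is — is matching the strictness of the inequalities across the equivalence. Proposition~\ref{Gpro} only requires the scaling coefficients to be nonnegative and not all zero, whereas the theorem demands that every $c_i$ be strictly positive; this strictness is exactly what the strict inequality $\inpro{g, f} > 0$ in Lemma~\ref{convthm} produces on the one side, and exactly what $c_i > 0$ supplies when that lemma is invoked on the other. Because $\G$ is real, the extracted coefficient vector $a$ is real and each $\inpro{a, r_i}$ is a real nonnegative number, so the square roots defining the $c_i$ make sense; no further difficulty is anticipated beyond this bookkeeping.
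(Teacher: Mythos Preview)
Your proposal is correct and follows essentially the same route as the paper: apply Proposition~\ref{Gpro} to reduce to the question of whether $null(\G)$ contains an entrywise positive vector, rewrite entries of a null-space vector as inner products $\inpro{a,r_i}$ via the basis $\{v_1,\ldots,v_l\}$, and invoke Lemma~\ref{convthm} with $K=\{r_1,\ldots,r_k\}$. Your write-up is in fact a bit more careful than the paper's own proof, separating the two implications, noting that $\G$ is real so the $r_i$ lie in $\R^l$, and flagging the strict-versus-nonnegative distinction between Proposition~\ref{Gpro} and Lemma~\ref{convthm}.
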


\begin{proof}
Suppose  there exist  $c_i>0, \ i =1, \cdots, k$  so that $\{c_i f_i\}_{i=1}^k$ is a tight frame.  
Then by Proposition \ref{Gpro}, this is equivalent to $ \G f  =0 $, where  $f = \left[
\begin{array}{c}
c_1^2\\
\vdots\\
c^2_k
\end{array}\right]$.
Since $\Set{v_1,\ldots,v_l}$ is a basis for $ null (\G)$,  
$$ f = \sum_{i=1}^l y(i) v_i = 
\left[
 \begin{array}{c}
\inpro{y, r_1}\\
\vdots\\
\inpro{y, r_k}
\end{array}\right],
$$ for some $y \in \R^l$.  
Since $ \inpro{y, r_i} = c_i^2 >0$ for $i = 1, 2, \cdots, k$ by considering $K = \Set{ r_1,\ldots,r_k }$ in  Lemma \ref{convthm}, 
this is equivalent to $0 \not\in {\rm conv} \Set{r_1,\ldots,r_k}$.
This completes the proof. 
\end{proof}

Using Theorem \ref{mthm:tfs} we can explicitly determine when there exists a solution to the tight frame scaling problem. 
We determine all scaling coefficients that make a unit-norm frame into a tight frame using the set 
\[ \gp{\bigcap_{j=1}^k H_i}^\circ, \]
where  \( H_i = \Set{y \in \R^l \,:\,   \inpro{y, r_i} \ge 0}, i = 1\cdots, k.  \)
For example, given a unit-norm frame 
\(  \Set{ \gp{ 
\begin{matrix}
1\\ 0
\end{matrix}}, 
\gp{\begin{matrix}
0\\ 1
\end{matrix}}, 
\gp{\begin{matrix}
-1\\ 0
\end{matrix}
} } 
\) in \(\R^2\), and a basis 
\(  \Set{ \gp{ 
\begin{matrix}
-1\\0\\1
\end{matrix}}, 
\gp{\begin{matrix}
1\\1\\0
\end{matrix}
} } 
\) 
for  \(null(\G)\), we get  
\(   r_1 = \gp{ 
\begin{matrix}
-1\\1
\end{matrix}}, 
 r_2 = \gp{ 
\begin{matrix}
0\\1
\end{matrix}},  \text{ and }
 r_3 = \gp{ 
\begin{matrix}
1\\0
\end{matrix}}\). 
Then 
\[ \gp{\bigcap_{j=1}^3 H_i}^\circ = \Set{(x,y) \,:\, x >0,\ y>0,\ y>x}:\]
\[\includegraphics[width=0.3\linewidth]{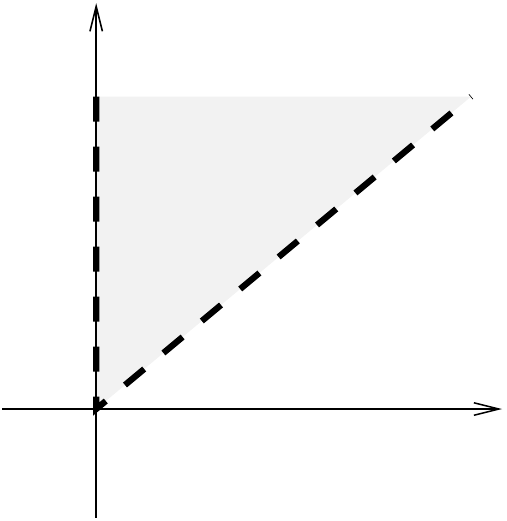}   \]
For any \( c \in  \gp{\bigcap_{j=1}^3 H_i}^\circ\), let 
\( a = Bc\), where \( B =\bk{\begin{matrix}
-1 & 1 \\
0 & 1\\
1 & 0
\end{matrix}} \) is the matrix generated from the basis of 
\(null(\G)\). 
Then Theorem \ref{mthm:tfs} implies that 
\( \sqrt{a(1)}, \sqrt{a(2)}\), and \( \sqrt{a(3)}\) are tight frame scaling coefficients. 

In general, how can we compute the coefficients for the tight frame scaling when such a scaling exists?  
We use the perceptron algorithm which provides a set of scaling coefficients for a given unit-norm frame in $\R^n$ or $\C^n$ to be a tight frame. The perceptron algorithm solves the problem \( Bc>0 \), where $B$ is the matrix generated from the basis of 
\(null(\G)\) \cite{perceptron, Rosenblatt}. Then the set \( \sqrt{a(1)}, \cdots, \sqrt{a(k)}\) are tight frame scaling coefficients, where \(a = By\). 

There is an equivalent formulation of the tight frame scaling problem as a matrix equation using the Gramian. 
Let $\f$ be a unit-norm frame for $\R^n$. There exist scalars $c_i>0, \ i =1, \cdots, k$  so that $\{c_if_i\}_{i=1}^k$ is a tight frame if and only if there exists a $C = \diag(c_1,\ldots,c_k)$ with $c_i > 0$ for all $i$ so that $(\theta^* C^*) (C \theta) = \lambda I = (C\theta)^* (C\theta)$ for some $\lambda > 0$. 
Multiplying the equation on the left by $\theta$  and on the right by $\theta^*$ yields   $\theta \theta^* C^* C \theta \theta^* = \lambda \theta \theta^* $.
Since $G = \theta \theta^*$, we have the following result. 

\begin{proposition}\label{thmM}
Let $\f$ be a unit-norm frame for $\R^n$ and let 
$G$ be the Gramian associated to $\f$. 
There exist scalars $c_i>0, \ i =1, \cdots, k$  so that $\{c_if_i\}_{i=1}^k$ is a \(\lambda\)-tight frame if and only if  
there exist a $D:= \diag(d_1,\ldots,d_k)$ with $d_i > 0$ for all $i=1,\ldots,k$ so that $GDG = \lambda G$.
\end{proposition}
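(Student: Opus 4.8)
The plan is to prove both implications by translating the tight-frame condition into a statement about the $n\times n$ frame operator and exploiting the fact that a frame spans $\Hn$, so that its analysis operator $\theta$ has full column rank $n$. The forward implication is essentially the computation already recorded just before the statement: if $\cf$ is $\lambda$-tight, then $(\theta^*C^*)(C\theta)=\lambda I_n$ with $C=\diag(c_1,\ldots,c_k)$, and multiplying on the left by $\theta$ and on the right by $\theta^*$ gives $\theta\theta^*\,C^*C\,\theta\theta^*=\lambda\,\theta\theta^*$, that is, $G(C^*C)G=\lambda G$. Setting $D:=C^*C=\diag(c_1^2,\ldots,c_k^2)$ then yields $GDG=\lambda G$ with each $d_i=c_i^2>0$, as required, and with the same constant $\lambda$.

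The converse is where the real work lies. I would start from $GDG=\lambda G$ with $d_i>0$, put $c_i:=\sqrt{d_i}>0$ and $C:=\diag(c_1,\ldots,c_k)$, so that $C^*C=D$, and aim to show that the frame operator $S_C:=\theta^*C^*C\theta$ of $\cf$ equals $\lambda I_n$. Using $G=\theta\theta^*$ together with the regrouping $\theta\theta^*\,C^*C\,\theta\theta^*=\theta\,(\theta^*C^*C\theta)\,\theta^*$ and $\lambda G=\theta\,(\lambda I_n)\,\theta^*$, the hypothesis $GDG=\lambda G$ becomes
\[
\theta\,(S_C-\lambda I_n)\,\theta^* = 0 .
\]
Writing $M:=S_C-\lambda I_n$ (a Hermitian $n\times n$ matrix), the goal is reduced to deducing $M=0$ from $\theta M\theta^*=0$.

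The key step, and the main (if modest) obstacle, is precisely this cancellation. Because $\f$ is a frame it spans $\Hn$, so $\theta$ has rank $n$; equivalently $\theta$ is injective and $\theta^*$ is surjective onto $\Hn$. From $\theta\,(M\theta^*)=0$ and injectivity of $\theta$ (left cancellation of a full-column-rank matrix) I obtain $M\theta^*=0$, and since the columns of $\theta^*$ span $\Hn$, the operator $M$ vanishes on a spanning set, whence $M=0$. Therefore $S_C=\lambda I_n$, which says exactly that $\cf$ is a $\lambda$-tight frame; positivity of $\lambda$ is automatic, since $\cf$ still spans $\Hn$ and its frame operator is positive definite. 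The identical argument carries over to $\C^n$ with $C^*=\overline{C}$, so the restriction to $\R^n$ in the statement is inessential.
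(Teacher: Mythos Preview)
Your proof is correct, and the forward direction reproduces the paper's argument verbatim: the paragraph immediately preceding the proposition multiplies $\theta^*C^*C\theta=\lambda I_n$ on the left by $\theta$ and on the right by $\theta^*$ to obtain $G(C^*C)G=\lambda G$, then sets $D=C^*C$.

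For the converse, the paper in fact supplies no argument at all; it simply states the proposition after the forward computation. Your cancellation step---writing $GDG=\lambda G$ as $\theta(S_C-\lambda I_n)\theta^*=0$ and using that $\theta$ has full column rank $n$ (because $\f$ spans $\Hn$) to peel off $\theta$ and then $\theta^*$---is correct and is precisely the missing half. So your proof is not merely consistent with the paper's approach but actually completes it.
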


\section{Tight Frame Scaling  in $\R^2$}

In Section 3, we used the Gramian of the unit-norm frame to characterize the tight frame scaling in $\R^n$ or $\C^n$.  
In the next theorem we prove that the necessary condition in Theorem \ref{Jprop:TFSPinHn} is also sufficient for tight frame scaling in $\R^2$. 

\begin{theorem}\label{Jthm:TFSPinR2}
Let \(\{f_i\}_{i=1}^k\) be a unit-norm frame for \( \R^2\). Then the following are equivalent:
\bn[1. ]
\item There exist positive  numbers  \(c_1,c_2,\ldots,c_k\) such that \(\{c_if_i\}_{i=1}^k\) is a tight frame for \(\mathbb{R}^2\).

\item There is no unit vector \(f\in\mathbb{R}^2\) such that \(|\langle f,f_i\rangle |\ge 1/\sqrt{2}\) for all \(i=1,2,\ldots,k\) and \(|\langle f,f_i\rangle |>1/\sqrt{2}\) for at least one \(i\).
\en
\end{theorem}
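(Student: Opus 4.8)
The plan is to establish the two implications separately. Since the condition in $(2)$ is exactly the conclusion of Theorem~\ref{Jprop:TFSPinHn} specialized to $n=2$ (where $1/\sqrt n = 1/\sqrt2$), the implication $(1)\Rightarrow(2)$ requires no new argument and I would simply invoke that theorem. All of the content lies in the converse $(2)\Rightarrow(1)$, which I would prove by contraposition: assuming that no positive scaling makes $\f$ tight, I would produce a unit vector $f\in\R^2$ with $|\inpro{f,f_i}|\ge 1/\sqrt2$ for all $i$ and strict inequality for at least one $i$.

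The first step is to pass to diagram vectors, which in $\R^2$ are again unit vectors: writing $v_i:=\tilde{f_i}$, Remark~\ref{rmk1} gives $\norm{v_i}=1$ because $\norm{f_i}=1$. By Proposition~\ref{Jprop:RnDVtight}, $\{c_if_i\}$ is tight exactly when $\sum_i c_i^2 v_i = 0$; hence the negation of $(1)$ says precisely that there are \emph{no} positive numbers $a_i$ (namely $a_i=c_i^2$) with $\sum_i a_i v_i = 0$. On the other side, Proposition~\ref{Jprop:RnDVproduct} with $n=2$ gives, for unit vectors $f$ and $f_i$, the identity $\inpro{\tilde f,v_i}=2\inpro{f,f_i}^2-1$, so that $|\inpro{f,f_i}|\ge 1/\sqrt2 \iff \inpro{\tilde f,v_i}\ge 0$ and $|\inpro{f,f_i}|>1/\sqrt2 \iff \inpro{\tilde f,v_i}>0$. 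Because the diagram map $D$ is surjective onto the unit circle when $n=2$ (Remark~\ref{rmk1}), as $f$ ranges over unit vectors $g:=\tilde f$ ranges over all unit vectors of $\R^2$. Thus $(2)$ fails if and only if there is a nonzero $g\in\R^2$ with $\inpro{g,v_i}\ge 0$ for all $i$ and $\inpro{g,v_i}>0$ for some $i$, and the theorem reduces to the purely convex-geometric equivalence: there is no positive combination of the $v_i$ equal to $0$ if and only if such a separating $g$ exists.

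To produce $g$ from the assumption that $\sum_i a_i v_i\neq0$ for all positive $a_i$, I would argue on $K=\Set{v_1,\ldots,v_k}$, whose convex hull is compact. If $0\notin {\rm conv}(K)$, Lemma~\ref{convthm} immediately yields $g$ with $\inpro{g,v_i}>0$ for every $i$, which more than suffices. If $0\in {\rm conv}(K)$, then the hypothesis forces $0$ to lie on the relative boundary rather than the relative interior (a point in the relative interior would be a strictly positive convex combination of the $v_i$), so a supporting line at $0$ provides $g\neq0$ with $\inpro{g,v_i}\ge0$ for all $i$, and one must check that not all $v_i$ lie on that line, which is where at least one strict inequality comes from. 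The main obstacle, and the point requiring the most care, is this boundary case together with the degenerate configuration in which all $v_i$ are collinear: there the $v_i$ can only equal $\pm w$ for a fixed unit vector $w$, and since $\f$ spans $\R^2$ both signs must occur, forcing $0$ into the relative interior of the segment joining $-w$ and $w$ and so contradicting the standing assumption. Hence the collinear case never arises under the hypothesis, the supporting-line argument always delivers a $g$ with at least one strict inequality, and pulling $g$ back through $D$ gives the desired unit vector $f$, completing the contrapositive.
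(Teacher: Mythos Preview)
Your proposal is correct and follows the same overall strategy as the paper: both reduce the problem to diagram vectors via Propositions~\ref{Jprop:RnDVtight} and~\ref{Jprop:RnDVproduct} together with the surjectivity of $D$ on $S^1$, and both finish with a separating/supporting hyperplane argument to produce the vector $g=\tilde f$.

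The technical packaging differs slightly. The paper works with the open positive cone $U=\{\sum c_i^2\tilde{f_i}:c_i\neq0\}$, first proving it is convex (Lemma~\ref{Jlemma:R2convex}) and open (the proposition following Proposition~\ref{U1}) and that the $\tilde{f_i}$ span $\R^2$ when $0\notin U$ (Proposition~\ref{U1}); it then applies the supporting-hyperplane lemma to $\bar U$ at $0$ and uses the spanning property to force a strict inequality. You instead work with the compact polytope $\operatorname{conv}\{v_1,\dots,v_k\}$ and split into the cases $0\notin\operatorname{conv}(K)$ (Lemma~\ref{convthm} applies directly) and $0\in\operatorname{conv}(K)$ (relative-interior characterization plus a supporting line). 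Your handling of the collinear degeneracy --- that if all $v_i\in\{\pm w\}$ then spanning forces both signs and hence a positive combination equal to $0$ --- is exactly the content of Proposition~\ref{U1}, just argued inline. So the two arguments are really the same convex-geometry idea expressed through two different auxiliary convex sets; your version trades the paper's preparatory lemmas for a short case analysis and the standard fact that points of $\mathrm{relint}(\operatorname{conv}(K))$ admit strictly positive convex representations.
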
 

We have  proved the forward direction of Theorem \ref{Jthm:TFSPinR2}. 
However, to prove the reverse direction  for a given unit-norm frame  \(\{f_i\}_{i=1}^k\) in \( \R^2\), we need to consider the following set, which is associated with the diagram vectors of the frame:
\[ U :=\left\{\sum_{i=1}^k c_i^2\tilde{f_i} \,: \, c_i  \neq 0, \  i=1, 2, \cdots, k  \right\}. \]

For any unit-norm vectors \(\{w_i\}_{i=1}^k\) in \( \R^2\), it is easy to see that 
\( W :=\left\{\sum_{i=1}^k a_i  w_i \,: \, a_i >0, \,\,  i=1, 2, \cdots, k  \right\} \) is a convex set.  
From Remark \ref{rmk1}, since the diagram operator in $\R^2$ preserves unit vectors, we obtain the following. 

\begin{lemma}\label{Jlemma:R2convex}
If \(\{f_i\}_{i=1}^k\) is  a unit-norm frame for \( \R^2\), then  
\[ U :=\left\{\sum_{i=1}^k c_i^2\tilde{f_i} \,: \, c_i  \neq 0, \  i=1, 2, \cdots, k\right\} \]
is a convex set.
\end{lemma}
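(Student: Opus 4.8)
The plan is to recognize $U$ as a set of the form $W$ described just above the lemma statement, and then to verify convexity of such a set by a one-line convex-combination computation. Concretely, I would first invoke Remark \ref{rmk1}: since each $f_i$ is a unit vector, its diagram vector $\tilde{f_i}$ is again a unit vector in $\R^2$. Thus the $\tilde{f_i}$ are exactly the kind of unit-norm vectors $\{w_i\}_{i=1}^k$ to which the preceding $W$-framework applies.

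Next I would handle the reparametrization that separates $U$ from $W$. The only difference is that $U$ uses coefficients $c_i^2$ with $c_i \neq 0$, whereas $W$ uses coefficients $a_i > 0$. But as $c_i$ ranges over the nonzero reals, $c_i^2$ ranges over exactly the positive reals, so the substitution $a_i = c_i^2$ gives
\[ U = \left\{ \sum_{i=1}^k a_i \tilde{f_i} \,:\, a_i > 0 \right\}, \]
which is precisely the set $W$ associated with the unit vectors $w_i := \tilde{f_i}$.

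Finally I would prove directly that $W$ is convex. Given $p = \sum_i a_i \tilde{f_i}$ and $q = \sum_i b_i \tilde{f_i}$ with all $a_i, b_i > 0$, and any $t \in [0,1]$, the convex combination is $t p + (1-t) q = \sum_i \bigl(t a_i + (1-t) b_i\bigr)\tilde{f_i}$, and each coefficient $t a_i + (1-t) b_i$ is strictly positive because $a_i, b_i > 0$ and $t, 1-t \geq 0$ cannot both vanish. Hence $t p + (1-t) q \in W = U$, which establishes convexity.

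I do not expect a genuine obstacle here: the lemma is essentially a bookkeeping consequence of Remark \ref{rmk1} together with the elementary convexity of a positively generated cone. The only point requiring a moment's care is the passage from $c_i \neq 0$ to $c_i^2 > 0$ (a two-to-one surjection onto the positive reals), since this is what allows me to identify $U$ with $W$. It is worth noting that the convexity computation itself does not actually use the unit-norm hypothesis; that hypothesis enters only through Remark \ref{rmk1} to place the $\tilde{f_i}$ into the unit-vector setting of the surrounding discussion.
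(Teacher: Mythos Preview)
Your proposal is correct and mirrors the paper's own argument: the paper likewise observes that for any vectors $\{w_i\}$ the set $W=\{\sum a_i w_i: a_i>0\}$ is convex, invokes Remark~\ref{rmk1} to ensure the $\tilde{f_i}$ are unit vectors, and then identifies $U$ with $W$ via $a_i=c_i^2$. Your additional remark that the unit-norm hypothesis is not actually used in the convexity step is also accurate.
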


As a consequence of  Proposition \ref{Jprop:RnDVproduct} and  the surjectivity of the diagram operator for $n=2$,  we obtain the following  equivalent formulation of the second statement of Theorem \ref{Jthm:TFSPinR2}, which provides a connection to the set $U$. 

\begin{lemma} \label{conv_pro}
Let \(\{f_i\}_{i=1}^k\) be a unit-norm frame for \( \R^2\).  Then the following are equivalent:
\bn[1. ]
\item There is no unit vector \(f\in\mathbb{R}^2\) such that \(|\langle f,f_i\rangle |\ge 1/\sqrt{2}\) for all \(i=1,2,\ldots,k\) and \(|\langle f,f_i\rangle |>1/\sqrt{2}\) for at least one \(i\).
\item There is no unit vector \(g \in\mathbb{R}^2\) such that \(  \langle g,  \tilde{f}_i\rangle  \ge 0 \) for all \(i=1,2,\ldots,k\) and \( \langle g ,\tilde{f}_i\rangle  > 0\) for at least one \(i\).
\en
\end{lemma}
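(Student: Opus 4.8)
The plan is to establish the equivalence by proving that the negations of the two statements are equivalent, and the bridge between them is the identity of Proposition \ref{Jprop:RnDVproduct} specialized to $n=2$. For unit vectors $f, f_i \in \R^2$ we have $\|f\| = \|f_i\| = 1$, so that proposition reduces to
\[
\langle \tilde{f}, \tilde{f_i}\rangle = 2\langle f, f_i\rangle^2 - 1.
\]
Consequently $\langle \tilde{f}, \tilde{f_i}\rangle \ge 0$ is equivalent to $|\langle f, f_i\rangle| \ge 1/\sqrt{2}$, and the strict inequality $\langle \tilde{f}, \tilde{f_i}\rangle > 0$ is equivalent to $|\langle f, f_i\rangle| > 1/\sqrt{2}$. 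This single computation is the algebraic heart of the argument, converting the cone condition on $\langle f, f_i\rangle$ into a half-space condition on the diagram vectors.

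For the direction \emph{not} (1) $\Rightarrow$ \emph{not} (2), I would suppose there is a unit vector $f \in \R^2$ with $|\langle f, f_i\rangle| \ge 1/\sqrt{2}$ for all $i$ and strict inequality for at least one $i$, and then simply set $g := \tilde{f}$. By Remark \ref{rmk1} we have $\|g\| = \|\tilde{f}\| = 1$, so $g$ is a genuine unit vector (lying in $\R^2$ since $n(n-1) = 2$), and the displayed equivalences immediately give $\langle g, \tilde{f_i}\rangle \ge 0$ for all $i$ with strict inequality for some $i$. Thus (2) fails. For the reverse direction \emph{not} (2) $\Rightarrow$ \emph{not} (1), I would take a unit vector $g \in \R^2$ with $\langle g, \tilde{f_i}\rangle \ge 0$ for all $i$ and $> 0$ for at least one $i$, and invoke the surjectivity of the diagram operator $D$ for $n=2$ (Remark \ref{rmk1}) to produce a unit vector $f \in \R^2$ with $\tilde{f} = g$. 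Applying the same equivalences to this $f$ then exhibits the failure of (1).

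The step I expect to carry the weight is the reverse direction, and specifically the appeal to surjectivity of $D$. The forward direction uses only the algebraic identity together with norm preservation, both of which hold in every dimension. The reverse direction, by contrast, genuinely requires that \emph{every} unit vector $g$ in the target sphere arise as $\tilde{f}$ for some unit $f$; this is precisely where the hypothesis $n = 2$ enters, since Remark \ref{rmk1} records that $D$ is surjective if and only if $n = 2$. Without surjectivity one could only translate those $g$ that happen to be of the special form $\tilde{f}$, which would not suffice to recover a witness in the original frame inner products — explaining why the lemma, and the sufficiency it supports, is confined to $\R^2$.
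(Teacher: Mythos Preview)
Your proposal is correct and matches the paper's own proof essentially line for line: both argue via the contrapositives, use the identity $\langle \tilde f,\tilde f_i\rangle = 2\langle f,f_i\rangle^2-1$ from Proposition~\ref{Jprop:RnDVproduct} to translate between the two sets of inequalities, set $g=\tilde f$ for one direction, and invoke the surjectivity of the diagram operator on $S^1$ (Remark~\ref{rmk1}) for the other. Your additional remark isolating surjectivity as the place where $n=2$ is genuinely used is a nice clarification not made explicit in the paper.
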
 

\begin{proof}
($\Rightarrow$)
Suppose there exists a unit vector \(g \in\mathbb{R}^2\) such that \(  \langle g,  \tilde{f}_i\rangle  \ge 0 \) for all \(i=1,2,\ldots,k\) and \( \langle g ,\tilde{f}_i\rangle  > 0\) for at least one \(i\). Since the diagram operator $D\,:\, S^1(1) \to S^1(1)$ is onto, there exists $f \in S^1(1)$ such that $\tilde{f} =g$.  Then, by Proposition \ref{Jprop:RnDVproduct},  this unit vector $f$ satisfies that \(|\langle f,f_i\rangle |\ge 1/\sqrt{2}\) for all \(i=1,2,\ldots,k\) and \(|\langle f,f_i\rangle |>1/\sqrt{2}\) for at least one \(i\). 

($\Leftarrow$)
Suppose there  exists a unit vector \(f\in\mathbb{R}^2\) such that \(|\langle f,f_i\rangle |\ge 1/\sqrt{2}\) for all \(i=1,2,\ldots,k\) and \(|\langle f,f_i\rangle |>1/\sqrt{2}\) for at least one \(i\).
Then, by Proposition \ref{Jprop:RnDVproduct},  the unit vector $g =\tilde{f}$ in $\R^2$ satisfies that  \(  \langle g,  \tilde{f}_i\rangle  \ge 0 \) for all \(i=1,2,\ldots,k\) and \( \langle g ,\tilde{f}_i\rangle  > 0\) for at least one \(i\).
\end{proof}

In the next two propositions, we present a few  properties of the set $U$. 

\begin{proposition}\label{U1}
Let  \(\{f_i\}_{i=1}^k\) be  a unit-norm frame for \( \R^2\). 
If   
\( 0 \notin U :=\left\{\sum_{i=1}^k c_i^2\tilde{f_i} \,: \, c_i  \neq 0, \  i=1, 2, \cdots, k \right\} \), 
then \(\{\tilde{f}_i\}_{i=1}^k\) is also  a unit-norm frame for \( \R^2\). 
\end{proposition}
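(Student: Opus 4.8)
The plan is to argue by contradiction, reducing the frame condition to a spanning condition. First I would note that, since $\{f_i\}_{i=1}^k$ is a unit-norm frame, each $f_i$ is a unit vector, so by Remark \ref{rmk1} each diagram vector $\tilde{f}_i$ is itself a unit vector in $\R^2$; in particular no $\tilde{f}_i$ is zero, and the unit-norm requirement in the conclusion is automatic. Because a sequence in a finite-dimensional space is a frame exactly when it spans the space, it then suffices to prove that $\{\tilde{f}_i\}_{i=1}^k$ spans $\R^2$. So I would suppose, for contradiction, that the diagram vectors fail to span $\R^2$; being nonzero, they must then all lie on a single line $\R v$ through the origin for some unit vector $v$, and as unit vectors each $\tilde{f}_i$ equals either $v$ or $-v$.

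The second step is to show that the hypothesis that $\{f_i\}$ is a frame forces both orientations to occur. Since $\{f_i\}$ spans $\R^2$, there are indices $i,j$ with $f_i,f_j$ linearly independent. Applying Proposition \ref{Jprop:RnDVproduct} with $n=2$ to the unit vectors $f_i,f_j$ gives $\langle\tilde{f}_i,\tilde{f}_j\rangle = 2\langle f_i,f_j\rangle^2-1$; were $\tilde{f}_i=\tilde{f}_j$, this inner product would equal $1$, forcing $|\langle f_i,f_j\rangle|=1$ and hence $f_i=\pm f_j$, contradicting independence. Thus the $\tilde{f}_i$ are not all equal, so, lying on the line $\R v$, they split into a nonempty set $P$ with $\tilde{f}_i=v$ and a nonempty set $N$ with $\tilde{f}_i=-v$.

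Finally I would exhibit an element of $U$ equal to $0$, contradicting $0\notin U$. Writing $\sum_i c_i^2\tilde{f}_i=\bigl(\sum_{i\in P}c_i^2-\sum_{i\in N}c_i^2\bigr)v$, I would take $c_i^2=1/|P|$ for $i\in P$ and $c_i^2=1/|N|$ for $i\in N$; these are strictly positive, so every $c_i\neq0$, and they make the two sums both equal to $1$, so the combination vanishes and $0\in U$, the desired contradiction. I expect the only genuine subtlety to be the middle step: one must invoke that $\{f_i\}$ \emph{spans} $\R^2$, not merely that the $f_i$ are unit vectors, in order to exclude the degenerate case in which all diagram vectors coincide (where $U$ would avoid $0$ even though the $\tilde{f}_i$ fail to span). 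The remaining parts — the reduction to spanning via Remark \ref{rmk1} and the balancing of positive coefficients across $P$ and $N$ — are routine bookkeeping.
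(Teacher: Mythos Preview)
Your proof is correct and structurally the same as the paper's: assume the diagram vectors fail to span, so each $\tilde f_i=\pm v$ for a fixed unit $v$, and derive a contradiction from the two hypotheses. The only genuine difference is the order in which you deploy those hypotheses. The paper first uses $0\notin U$ together with the convexity Lemma~\ref{Jlemma:R2convex} to force all the $\tilde f_i$ to point in the \emph{same} direction, and then uses $\tilde f=\tilde g\iff f=\pm g$ to conclude the $f_i$ are collinear, contradicting the frame hypothesis. You instead use the frame hypothesis first (via Proposition~\ref{Jprop:RnDVproduct}) to guarantee that \emph{both} $v$ and $-v$ occur, and then explicitly balance coefficients to place $0$ in $U$, contradicting $0\notin U$. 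Your route is marginally more self-contained in that it avoids citing the convexity lemma, replacing it with a direct two-line construction; the paper's route is slightly cleaner in that it never needs to write down explicit $c_i$. Either way the content is the same.
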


\begin{proof} By Propostion \ref{Jprop:RnDVproduct},  \(\{\tilde{f}_i\}_{i=1}^k\) is a set of  unit-norm vectors. Suppose that \(\{\tilde{f_i}\}_{i=1}^k\) does not span \(\mathbb{R}^2\).  Then 
all the diagram vectors are collinear and furthermore, by Lemma \ref{Jlemma:R2convex}, 
 all vectors must be positive scalar multiples of one another because \(0\notin U\). But it is clear from the definition of the diagram vector associated with a vector that \(\tilde{f}=\tilde{g}\) for some \(f,g \in\mathbb{R}^2\) if and only if \(f=\pm g \). This implies that all the vectors in \(\{f_i\}_{i=1}^k\) are collinear, so they do not span \(\mathbb{R}^2\), which is a contradiction.
\end{proof}

\begin{proposition}
Let  \(\{f_i\}_{i=1}^k\) be  a unit-norm frame for \( \R^2\). 
If   
\( 0 \notin U :=\left\{\sum_{i=1}^k c_i^2\tilde{f_i} \,: \, c_i  \neq 0, \  i=1, 2, \cdots, k \right\} \), 
then $U$ is an open set. 
\end{proposition}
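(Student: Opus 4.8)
The plan is to realize $U$ as the image of an open set under a surjective linear map and then invoke the elementary fact that surjective linear maps between finite-dimensional spaces are open. First I would rewrite $U$ in a more convenient form. Since $c_i \neq 0$ is equivalent to $c_i^2 > 0$ and every positive real number is a square, we have
\[ U = \left\{ \sum_{i=1}^k a_i \tilde{f_i} \,:\, a_i > 0, \ i = 1, \ldots, k \right\}. \]
Thus, if $T \colon \R^k \to \R^2$ denotes the linear map $T(a) = \sum_{i=1}^k a(i)\tilde{f_i}$ and $P = \{a \in \R^k : a(i) > 0 \text{ for all } i\}$ is the open positive orthant, then $U = T(P)$.

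The key step is to establish that $T$ is surjective, and this is precisely where the hypothesis $0 \notin U$ enters. By Proposition \ref{U1}, the condition $0 \notin U$ forces $\{\tilde{f_i}\}_{i=1}^k$ to be a unit-norm frame for $\R^2$, hence a spanning set. Consequently the columns of the matrix representing $T$ span $\R^2$, so $T$ is onto. Since any surjective linear map between finite-dimensional spaces is an open map, and $P$ is open in $\R^k$, it follows at once that $U = T(P)$ is open in $\R^2$.

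If one prefers to avoid appealing to the open mapping theorem, the same conclusion can be reached directly. Given a point $p_0 = \sum_i a_i \tilde{f_i} \in U$ with every $a_i > 0$, choose two diagram vectors, say $\tilde{f}_{j_1}$ and $\tilde{f}_{j_2}$, that form a basis of $\R^2$ (possible because the $\tilde{f_i}$ span). Any $q$ near $p_0$ may be written as $q = p_0 + \delta$, and expressing $\delta = \epsilon_1 \tilde{f}_{j_1} + \epsilon_2 \tilde{f}_{j_2}$ produces coefficients $\epsilon_1, \epsilon_2$ that depend linearly, and hence continuously, on $\delta$. Adjusting only the $j_1$ and $j_2$ coefficients then writes $q$ as a combination of the $\tilde{f_i}$ whose coefficients stay strictly positive once $\delta$ is small enough, placing a whole neighborhood of $p_0$ inside $U$.

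The main obstacle is not the openness argument itself, which is routine once surjectivity is available, but rather recognizing that the spanning property is indispensable and tracing it correctly back to the hypothesis through Proposition \ref{U1}. Indeed, were the diagram vectors to fail to span $\R^2$ (equivalently, were $0 \in U$ with the $\tilde{f_i}$ collinear), the set $U$ would be confined to a proper subspace and could not be open; so the implication genuinely depends on the assumption $0 \notin U$.
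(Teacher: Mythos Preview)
Your argument is correct and is essentially identical to the paper's own proof: define the linear map $T(a)=\sum_i a(i)\tilde f_i$, use Proposition~\ref{U1} to get surjectivity, and conclude that $U=T\bigl((0,\infty)^k\bigr)$ is open via the open mapping theorem. The additional elementary variant you sketch is fine but unnecessary here.
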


\begin{proof}
Let \(T:\mathbb{R}^k\to\mathbb{R}^2\) be the linear map defined by \(T(v)=\sum_{i=1}^k v(i)\tilde{f_i}\) for all \(v\in\mathbb{R}^k\). By Proposition \ref{U1},   \(T\) is surjective. Since $T$ is surjective and continuous, by the open mapping theorem,  $ T\gp{ (0,\infty)^k} = U$ is an open set. 
\end{proof}

\begin{lemma}[\cite{hyper} p.26] \label{hyperplane}
If $w$ is a boundary point of  a closed convex set $V$, then there exists at least one supporting hyperplane of $V$ passing through $w$.  
\end{lemma}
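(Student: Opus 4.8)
The plan is to prove the classical supporting hyperplane theorem by combining the nearest-point projection onto a closed convex set with a compactness argument on the unit sphere. Since $V$ is closed, the boundary point $w$ lies in $V$, and because $w$ is a boundary point there is a sequence $\{y_m\}$ with $y_m \notin V$ and $y_m \to w$. The idea is to separate each exterior point $y_m$ from $V$ by a hyperplane, record the separating directions as unit vectors, and extract a convergent subsequence whose limit furnishes the desired supporting hyperplane at $w$.

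First I would invoke the projection theorem for closed convex sets in $\R^n$: for each $m$, since $V$ is closed and convex and $y_m \notin V$, there is a unique nearest point $p_m \in V$, and it satisfies the variational inequality $\inpro{y_m - p_m, x - p_m} \le 0$ for all $x \in V$. Because $y_m \notin V$ we have $y_m \ne p_m$, so I may set the unit vector $a_m := (y_m - p_m)/\norm{y_m - p_m}$, which then satisfies $\inpro{a_m, x - p_m} \le 0$ for all $x \in V$.

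Next I would control the base points $p_m$. Since $w \in V$ and $p_m$ is the nearest point of $V$ to $y_m$, we have $\norm{y_m - p_m} \le \norm{y_m - w} \to 0$, whence $p_m \to w$. The vectors $a_m$ all lie on the compact sphere $S^{n-1}$, so some subsequence converges, $a_{m_j} \to a$ with $\norm{a} = 1$. Passing to the limit in $\inpro{a_{m_j}, x - p_{m_j}} \le 0$ yields $\inpro{a, x - w} \le 0$ for all $x \in V$. Hence the hyperplane $\Set{z : \inpro{a, z - w} = 0}$ passes through $w$ and contains $V$ in one of its closed half-spaces, which is precisely a supporting hyperplane of $V$ at $w$.

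The main obstacle is guaranteeing that the limiting functional is nonzero: a naive limit of the raw separating vectors $y_m - p_m$ would collapse to $0$ since $\norm{y_m - p_m} \to 0$. Normalizing to the unit sphere before taking limits circumvents this, and compactness of $S^{n-1}$ (finite dimensionality) then supplies a nonzero limit direction $a$. The only other point requiring care is the projection theorem itself, whose existence and uniqueness rest on the convexity and closedness of $V$ together with the completeness of $\R^n$; the variational characterization of the projection is exactly what makes each finite-stage separation quantitative enough to survive passage to the limit.
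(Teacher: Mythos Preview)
Your proof is correct and is the standard argument for the supporting hyperplane theorem in finite dimensions: approximate the boundary point from outside, separate each exterior point via the nearest-point projection, normalize the separating directions, and pass to a limit using compactness of $S^{n-1}$. All the steps you flag as delicate (nonvanishing of the limit direction, convergence $p_m \to w$, the variational characterization of the projection) are handled properly.

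There is nothing to compare against in the paper itself: Lemma~\ref{hyperplane} is quoted without proof from an external reference and is used as a black box in the proof of Theorem~\ref{Jthm:TFSPinR2}. Your write-up supplies exactly the classical proof one would find in a convex-analysis text, and it is fully consistent with the finite-dimensional setting of the paper.
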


Using lemmas \ref{Jlemma:R2convex},  \ref{conv_pro}, \ref{hyperplane} and Proposition \ref{U1}  we now prove the sufficency part of Theorem \ref{Jthm:TFSPinR2}.

\begin{proof}[Proof of Theorem \ref{Jthm:TFSPinR2}]
Since the forward direction has been proved in Theorem \ref{Jprop:TFSPinHn},  
we prove the contrapositive of the reverse direction. 
Assume that there do not exist nonzero scalars \(c_1,c_2,\ldots,c_k\) such that \(\{c_i f_i\}_{i=1}^k\) is a tight frame for \(\mathbb{R}^2\). 
Since $\displaystyle  \sum_{i=1}^k  \widetilde{ c_i f_i} = \sum_{i=1}^k c_i^2\tilde{f_i}  $, this is equivalent to saying 
\[0\notin U:= \left\{\sum_{i=1}^k c_i^2\tilde{f_i} \,: \, c_i  \neq 0, \  i=1, 2, \cdots, k \right\} .\]
Let $\bar{U}$ be the closure of $U$. 
Since $\bar{U}$ is a closed convex set in $\R^n$ and the zero is a point on the boundary of $U$, by 
Lemma \ref{hyperplane}, there exists a supporting hyperplane of $\bar{U}$ passing through $0$.  
Let $g \in \R^2$ be a unit normal  vector for the supporting hyperplane such that for all $h \in \bar{U}$,  $\inpro{g , h} \ge 0.$
We claim that  \(  \langle g,  \tilde{f}_i\rangle  \ge 0 \) for all \(i=1,2,\ldots,k\) and \( \langle g ,\tilde{f}_i\rangle  > 0\) for at least one \(i\).
If the first part of the claim does not hold for some \(i\), then making \(c_i\) very large compared to the other scalars would yield an element \(h\) of \(U\) with \(\langle g,h\rangle<0\). 
If the second part of the claim does not hold, then \(\left\langle g,\tilde{f_i}\right\rangle=0\) for all  \(i=1, 2, \cdots, k\), which contradicts Proposition \ref{U1}. Hence there  exists a unit vector \(g \in\mathbb{R}^2\) such that \(  \langle g,  \tilde{f}_i\rangle  \ge 0 \) for all \(i=1,2,\ldots,k\) and \( \langle g ,\tilde{f}_i\rangle  > 0\) for at least one \(i\).
Using Lemma \ref{conv_pro}, we have completed the proof of the theorem. 
\end{proof}

The following  corollary is a consequence of Theorem \ref{Jthm:TFSPinR2}.
\begin{corollary}\label{cor1}
Let  \(\{f_i\}_{i=1}^k\) be  a unit-norm frame for \( \R^2\).  
If  $ \displaystyle \bigcap_{i=1}^k   cone(f_i)  = \emptyset$, then 
there exist positive  numbers  \(c_1,c_2,\ldots,c_k\) such that \(\{c_if_i\}_{i=1}^k\) is a tight frame for \(\mathbb{R}^2\).
\end{corollary}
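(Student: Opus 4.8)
The plan is to deduce the corollary directly from Theorem \ref{Jthm:TFSPinR2}. That theorem asserts, for a unit-norm frame in $\R^2$, the equivalence of (1) the existence of positive scaling coefficients producing a tight frame, and (2) the nonexistence of a unit vector $f$ with $|\inpro{f,f_i}| \ge 1/\sqrt{2}$ for all $i$ and $|\inpro{f,f_i}| > 1/\sqrt{2}$ for at least one $i$. Since the conclusion of the corollary is precisely statement (1), it suffices to show that the hypothesis $\bigcap_{i=1}^k cone(f_i) = \emptyset$ implies statement (2); the corollary then follows by invoking the implication (2) $\Rightarrow$ (1).

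To establish this implication, I would unwind the definition $cone(f_i) = \Set{ g \in \R^2 : |\inpro{g,f_i}| \ge 1/\sqrt{2} }$, so that $\bigcap_{i=1}^k cone(f_i)$ is exactly the set of vectors $g \in \R^2$ (of arbitrary norm) satisfying $|\inpro{g,f_i}| \ge 1/\sqrt{2}$ for every $i$. The hypothesis says this set is empty. Now any unit vector witnessing the failure of (2) would in particular satisfy $|\inpro{f,f_i}| \ge 1/\sqrt{2}$ for all $i$, and would therefore lie in $\bigcap_{i=1}^k cone(f_i)$. As that intersection is empty, no such witness exists, which is exactly statement (2). Applying Theorem \ref{Jthm:TFSPinR2} then yields the desired positive scalars $c_1, \ldots, c_k$.

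I do not anticipate a substantive obstacle: the mathematical content resides entirely in Theorem \ref{Jthm:TFSPinR2}, and the corollary is a clean specialization of it. The only point meriting care is the logical bookkeeping, namely recognizing that emptiness of the intersection of the closed (non-strict) cones is a formally stronger hypothesis than the nonexistence of the special unit vectors described in (2): the latter demand both the full family of non-strict inequalities and an additional strict inequality, so any such vector forms part of $\bigcap_{i=1}^k cone(f_i)$. Because excluding the larger set of candidates certainly excludes this subset, the implication is valid, and the proof reduces to a short appeal to the theorem.
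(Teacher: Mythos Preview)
Your proposal is correct and follows exactly the approach the paper intends: the corollary is stated as an immediate consequence of Theorem~\ref{Jthm:TFSPinR2}, and your argument that emptiness of $\bigcap_{i=1}^k cone(f_i)$ forces condition~(2) of that theorem is precisely the observation needed.
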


Note that  Corollary  \ref{cor1} provides  a new way to construct a tight frame which has more irregular angle distribution  than real Harmonic frames \cite{ffu}. \\

Next we present an algorithm to produce the tight frame scaling constants in $\R^2$ using the results from this section.  
 We generate a sequence of scalars producing a tight frame 
when the  unit-norm frame $\f$ for \(\mathbb{R}^2\)  satisfies the property that 
\begin{center}
 ``(Q)  there is no unit vector \( f \in\mathbb{R}^2\) such that  \(|\langle f,f_i\rangle|\ge 1/\sqrt{2}\) for all \(i =  1, 2,  \cdots,  k\) and \(|\langle f,f_i\rangle|>1/\sqrt{2}\) for at least one \(i\)."
 \end{center}
This method works by finding sets of \(2\) and \(3\) vectors that can be scaled to produce a tight frame.  
If \(\{f_i\}_{i=1}^2\) is a unit-norm frame for \(\mathbb{R}^2\) with the property (Q), then by  Theorem  \ref{Jthm:TFSPinR2} and Corollary \ref{cor2}, $f_1$ and $f_2$ are perpendicular to each other.  Thus we take  \(c_1=c_2=1\). 
If \(\{f_i\}_{i=1}^3\) is a unit-norm frame for \(\mathbb{R}^2\) with the property (Q),   we let \(J:\mathbb{R}^2\to\mathbb{R}^2\) be a linear operator that rotates each vector counter-clockwise by \(\pi/2\) radians, so that in particular \(\langle Jf,f\rangle=0\) for each \(f\in\mathbb{R}^2\). 
Since the property (Q) implies that 
 for each unit vector \(y\in\mathbb{R}^2\), \(\langle y,\tilde{f_i}\rangle\) takes either all zero values or both positive and negative values and  \(\langle J \tilde{f_1},\tilde{f_1}\rangle=0\), 
$$  \langle J \tilde{f_1},\tilde{f_2}\rangle =\langle J \tilde{f_1},\tilde{f_3}\rangle = 0, \text{ or}  $$
$$ \langle J \tilde{f_1},\tilde{f_2}\rangle \cdot \langle J \tilde{f_1},\tilde{f_3}\rangle < 0.  $$
If  both \(\langle J \tilde{f_1},\tilde{f_2}\rangle\) and \(\langle J \tilde{f_1},\tilde{f_3}\rangle\) are zero, then we let \(c_2=c_3=1\). Otherwise assume without loss of generality that \(\langle J \tilde{f_1},\tilde{f_2}\rangle>0\) and \(\langle J \tilde{f_1},\tilde{f_3}\rangle<0\), and let \(c_2=\sqrt{-\langle J \tilde{f_1},\tilde{f_3}\rangle}\) and \(c_3=\sqrt{\langle J \tilde{f_1},\tilde{f_2}\rangle}\). 
In either case,  let \(c_1=\sqrt{-\langle\tilde{f_1},c_2^2\tilde{f_2}+c_3^2\tilde{f_3}\rangle}\).  
Then we get
$$  \left\langle J \tilde{f_1},\sum_{i=1}^3\widetilde{c_if_i}\right\rangle =
\left\langle\tilde{f_1},\sum_{i=1}^3\widetilde{c_if_i}\right\rangle =0,
$$
which requires that the sum of the scaled diagram vectors is zero.

\begin{proposition}\label{Jthm:R2coeffs}
If \(\{f_i\}_{i=1}^k\) is a unit-norm frame for \(\mathbb{R}^2\) with the property (Q), then 
 every vector in the frame is a member of some \(2\)- or \(3\)-subset of the frame with the property (Q).
 \end{proposition}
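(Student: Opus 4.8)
The plan is to translate the statement entirely into the language of the diagram vectors $\{\tilde f_i\}_{i=1}^k$, which by Proposition \ref{Jprop:RnDVproduct} are unit vectors in $\R^2$, and then to exploit a conical Carath\'eodory argument. The key reformulation I would record first is this: for a finite set $S$ of unit vectors in $\R^2$, the subcollection $\{f_i\}_{i\in S}$ has property (Q) if and only if $0$ is a \emph{strictly positive} combination of the corresponding diagram vectors, i.e. $0=\sum_{i\in S}\lambda_i\tilde f_i$ with all $\lambda_i>0$. The implication I actually need, namely that such a combination forces (Q), is immediate: if some unit $g$ satisfied $\langle g,\tilde f_i\rangle\ge 0$ for all $i\in S$ with strict inequality somewhere, then $0=\langle g,0\rangle=\sum_{i\in S}\lambda_i\langle g,\tilde f_i\rangle>0$, a contradiction, so no such $g$ exists and (Q) follows from Lemma \ref{conv_pro} and Proposition \ref{Jprop:RnDVproduct}.

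With this in hand I would begin the proof by applying Theorem \ref{Jthm:TFSPinR2} to the whole frame: since $\{f_i\}_{i=1}^k$ has (Q), there are positive scalars $c_1,\ldots,c_k$ with $\{c_if_i\}_{i=1}^k$ tight, so by Proposition \ref{Jprop:RnDVtight},
\[
\sum_{i=1}^k c_i^2\,\tilde f_i=0 ,
\]
a strictly positive combination of the diagram vectors. Fixing an index $j$ and solving for $\tilde f_j$ gives
\[
-\tilde f_j=\sum_{i\neq j}\frac{c_i^2}{c_j^2}\,\tilde f_i\in\operatorname{cone}\{\tilde f_i:i\neq j\},
\]
so the unit vector $-\tilde f_j$ lies in the conical hull of the remaining diagram vectors.

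The crux is then Carath\'eodory's theorem for cones in $\R^2$: since $-\tilde f_j\neq 0$ lies in $\operatorname{cone}\{\tilde f_i:i\neq j\}$, it can be written as a nonnegative combination of a linearly independent subset $\{\tilde f_i:i\in T\}$ with $|T|\le 2$ and all coefficients strictly positive. I would then split into cases. If $|T|=1$, say $T=\{a\}$, then $-\tilde f_j=\mu\tilde f_a$ with $\mu>0$; comparing norms of these two unit vectors forces $\tilde f_a=-\tilde f_j$, so $\tilde f_j+\tilde f_a=0$ is a strictly positive combination and $\{f_j,f_a\}$ is a $2$-subset with (Q). If $|T|=2$, say $T=\{a,b\}$, then $\tilde f_j+\mu_a\tilde f_a+\mu_b\tilde f_b=0$ with $1,\mu_a,\mu_b>0$, and the linear independence of $\tilde f_a,\tilde f_b$ guarantees that $j,a,b$ are three distinct indices; hence $\{f_j,f_a,f_b\}$ is a $3$-subset whose diagram vectors admit $0$ as a strictly positive combination, so it has (Q). Either way $f_j$ belongs to a $2$- or $3$-subset with (Q), and since $j$ was arbitrary the proof is complete.

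I expect the main obstacle to be making the reformulation in the first paragraph fully rigorous for arbitrary subsets rather than for spanning frames, in particular checking that the passage from the diagram-vector condition back to (Q) does not secretly rely on the subset spanning $\R^2$. This is handled by observing that any subset occurring above automatically spans: if its diagram vectors admitted $0$ as a strictly positive combination while the underlying vectors were collinear, then all those diagram vectors would coincide (as $\tilde f=\tilde g$ forces $f=\pm g$), making a strictly positive null combination impossible. The conical Carath\'eodory step itself is standard, and the norm comparison in the $|T|=1$ case is the clean mechanism producing genuine orthogonality $f_j\perp f_a$ in the $2$-subset case, matching the algorithm described before the proposition.
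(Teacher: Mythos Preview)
Your proof is correct, and it takes a genuinely different route from the paper's.

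The paper argues by contradiction via Helly's theorem: fixing an index $a$, if no $2$- or $3$-subset containing $f_a$ has property (Q), then for every pair $b,c$ the half-circles $H_i=\{y\in S^1:\langle y,\tilde f_i\rangle\ge 0\}$ satisfy $H_a\cap H_b\cap H_c\neq\emptyset$, and Helly then forces $\bigcap_{i=1}^k H_i\neq\emptyset$, contradicting (Q) for the full frame. Your argument instead invokes Theorem \ref{Jthm:TFSPinR2} up front to obtain a strictly positive relation $\sum_i c_i^2\tilde f_i=0$, isolates $-\tilde f_j$, and applies conical Carath\'eodory in $\R^2$ to extract at most two diagram vectors that already carry a positive null-combination with $\tilde f_j$; the spanning check in your final paragraph is exactly what is needed to certify that the resulting $2$- or $3$-subset is a genuine frame and that Lemma \ref{conv_pro} applies to it.

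What each approach buys: the paper's Helly argument is purely existential and self-contained (it does not appeal to Theorem \ref{Jthm:TFSPinR2}), while your Carath\'eodory argument is constructive---it actually exhibits the subset together with scaling coefficients $1,\mu_a,\mu_b$, which dovetails nicely with the algorithmic goal stated just before the proposition. The two convexity tools are in a sense dual here (separating half-spaces versus conical generation), and your use of Theorem \ref{Jthm:TFSPinR2} as a starting point is legitimate since that theorem is proved earlier and does not depend on the proposition.
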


\begin{proof}
Let \(a\in I:=\{1,2,\ldots,k\}\) be any index.
If there exists of \(b \in I \) for which \(\langle\tilde{f_a},\tilde{f_b}\rangle=-1\), then \(\{f_a,f_b\}\) is a frame satisfying the given condition. If, on the other hand, there is no such index \(b\), we claim that there exist \(b,c\in I\) so that \(\{f_a,f_b,f_c\}\) is a frame satisfying the property (Q).
Suppose that it is false. Then for every pair \(b,c\in I \backslash \Set{a} \),  there is a unit vector \(y\in\mathbb{R}^2\) such that \(\langle y,\tilde{f_i}\rangle\ge 0\) for all \(i=a,b,c\) and \(\langle y,\tilde{f_i}\rangle>0\) for some \(i\). 
For each index \(i = 1, 2, \cdots, k\), we define  \(H_i :=\{y\in\mathbb{R}^2:\|y\|=1,\langle y,\tilde{f_i}\rangle\ge 0\}\). 
Then our assumption implies that for every pair \(b,c\in I \backslash \Set{a} \), we have  \(H_a\cap H_b\cap H_c\neq\emptyset\). 
Since $H_i$ is a compact set, by  Helly's Theorem (\cite{cheney}, p. 19), we have 
  \(\bigcap_{i=1}^k H_i\neq \emptyset\). 
 Therefore there is a unit vector \(y\in\mathbb{R}^2\) such that \(\langle y,\tilde{f_i}\rangle\ge 0\) for all \(i = 1, 2, \cdots, k \). Note that \(\langle y,\tilde{f_i}\rangle \) cannot be zero for all \(i = 1, 2, \cdots, k \) since we are assuming \(-\tilde{f_a}\) is not the diagram vector of any vector in the frame, so for some \(i\), \(\langle y,\tilde{f_i}\rangle>0\). This contradicts the property (Q) of the  unit-norm frame $\f$. 
 \end{proof}
 
Using Proposition \ref{Jthm:R2coeffs} together with Corollary \ref{cor1} and \ref{cor2}, we provide the algorithm to generate the squares of scaling coefficients, $C_1^2, \cdots, C_k^2$, for a unit-norm frame $\f$ of \(\mathbb{R}^2\) with the property (Q) to be tight.
  \bn[1. ]
 \item Set $I:=\{1,2,\ldots,k\}$ and  $C_i^2 := 0$ for all $i\in I$.  
 \item Define $I_1 := \Set{a \in I \, :\, \inpro{f_a, f_b} =0 \text{ for some } b \in I}, \ 
 I_2 := I \backslash I_1$.
  \item For each $a \in I_1, \ b \in I_1\backslash \Set{a}$, 
 if $\inpro{f_a, f_b} =0$ then \\
 set $C_a^2 := C_a^2 +1$ and $C_b^2 := C_b^2+ 1$.  
 \item For each $a \in I_2, \ b \in I \backslash \Set{a}, \ c \in I \backslash \Set{a, b}$,
 if $ {\rm cone}(f_a) \cap {\rm cone}(f_b) \cap {\rm cone}(f_c)  \neq \emptyset$, then 
 compute  $c_a, c_b$ and $c_c$ using the method described for sets of 3 vectors in this section and \\
 set $C_a^2 := C_a^2 + c_a^2, \ C_b^2 := C_b^2+ c_b^2, \ C_c^2 := C_c^2 + c_c^2$. 

 \en

\section*{Acknowlegement}
Copenhaver, Logan, Mayfield, Narayan, and Sheperd were supported by the NSF-REU Grant DMS 08-51321.
 Kim was supported by the Central Michigan University ORSP Early Career Investigator
(ECI) grant \#C61373.

\bibliographystyle{amsplain}

\bibliography{References}

%
%

\end{document}